\numberwithin{equation}{section}
\newtheorem{theorem}{\bf Theorem}[section]
\newtheorem{definition}{\bf Definition}[section]
\newtheorem{lemma}{\bf Lemma}[section]
\newtheorem{proposition}{\bf Proposition}[section]
\def\udots{\mathinner{\mkern1mu\raise-1pt\vbox{\kern7pt\hbox{.}}\mkern2mu
    \raise2pt\hbox{.}\mkern2mu\raise5pt\hbox{.}\mkern1mu}}
\begin{document}

\begin{center}
{\Large \bf Absolute Continuity of the Laws of the Solutions to Parabolic SPDEs with Two Reflecting Walls}
\end{center}

\begin{center}
Wen Yue
\end{center}

\begin{center}
{\scriptsize Institute of Analysis and Scientific Computing, Technology University of Vienna, Wiedner Haupt. 8-10, Vienna, Austria}
\end{center}

\begin{abstract}
In this paper, we focus on the existence of the density for the law of the solutions to parabolic stochastic partial differential equations with two reflecting walls. The main tool is Malliavin Calculus. 
\end{abstract}

{\it Keywords:} 
parabolic stochastic partial differential equations, two reflecting walls, absolute continuity,  Malliavin calculus.

\section{Introduction}

Parabolic SPDEs with reflection are natural extension of the widely studied deterministic parabolic obstacle problems. It was proved by Funaki and Olla in \cite{FO} that the fluctuations of a $\nabla \phi$ interface model near a hard wall converge in law to the stationary solution of an SPDE with reflection. In recent years, there is a growing interest on the study of SPDEs with reflection. Several works are devoted to the existence and uniqueness of the solutions, such as \cite{NP} by Naulart and Pardoux, \cite{XZ} by Xu and Zhang and \cite{O} by Otobe. Especially, the existence and uniqueness of the solution to a fully non-linear SPDE with two reflecting walls was proved by Yang and Zhang \cite{YZ1}. 

We focus here on the existence of the density of the law of the solution, using Malliavin calculus. Malliavin calculus associated with white noise was also used  by Pardoux and Zhang \cite{PZ}, Bally and Pardoux \cite{BP1} to establish the existence of the density of the law of the solution to parabolic SPDE. The case of parabolic stochastic partial differential equation with one reflecting wall was studied by Donati-martin and Pardoux \cite{DP1}. For parabolic SPDEs with two reflecting walls, we construct a convergent sequence $u^{\epsilon,\delta}$ with two indices, based on the case of one reflecting wall. It is more demanding to prove the convergence of $u^{\epsilon,\delta}$ and identify the limit as the solution of the original equation. To prove the positivity of the Malliavin derivative of the solution, we need more delicate partition of sample spaces.

This paper is organized as follows: Section 2 is devoted to fundamental knowledge of parabolic stochastic partial differential equations with two reflecting walls and Malliavin calculus associated with white noise. In Section 3, we recall some results obtained by Yang and Zhang \cite{YZ1} about the existence and uniqueness of the solution to parabolic SPDEs with two reflecting walls and we prove the Malliavin differentiability of the solution. Finally, we give the existence of the density of the law of the solution.

 \section{Preliminaries}

Notation: Let $Q=[0,1]\times R_{+}$, $Q_{T}=[0,1]\times[0,T]$, $V=\{u \in H^{1}([0,1]),u(0)=u(1)=0\}$ where $H^{1}([0,1])$ denotes the usual Sobolev space of absolutely continuous funcitons defined on $[0,1]$ whose derivative belongs to $L^{2}([0,1])$, and $A=-\frac{\partial^{2}}{\partial x^{2}}$.

Consider the following stochastic partial differential equation with two reflecting walls:
\begin{equation}
  \left\{
   \begin{aligned}
   \frac{\partial{u}}{\partial{t}}
   =\frac{\partial^{2}u}{\partial{x}^{2}}+f(x,t,u)+\sigma(x,t,u)\dot{W}(x,t)+\eta-\xi,\\
   u(0,t)=0,u(1,t)=0,\ for\  t\geq0,\\
   u(x,0)=u_{0}(x)\in C([0,1]),\\
   h^{1}(x,t)\leq u(x,t)\leq h^{2}(x,t), for (x,t)\in Q, a.s.\\
   \end{aligned}
  \right.  \label{SPDE two walls}
\end{equation}
where $\dot{W}$ denotes the space-time white noise defined on a complete probability space\\
$(\Omega,\mathcal{F},\{\mathcal{F}_{t}\}_{t\geq 0},P)$, where $\mathcal{F}_{t}=\sigma(W(x,s):x\in [0,1],0\leq s\leq t)$, $u_{0}$ is a continuous function on $[0,1]$, which vanishes at $0$ and $1$.

We assume that the reflecting walls $h^{1}(x,t)$,$h^{2}(x,t)$ are continuous functions satisfying
$h^{1}(0,t),h^{1}(1,t)\leq 0$, $h^{2}(0,t),h^{2}(1,t)\geq 0$, and\\
(H1)$h^{1}(x,t)< h^{2}(x,t)$ for $x\in(0,1)$ and $t\in R_{+}$;\\
(H2)$\frac{\partial{h^{i}}}{\partial{t}}+\frac{\partial^{2}h^{i}}{\partial{x^{2}}}\in L^{2}([0,1]\times [0,T])$, where
$\frac{\partial}{\partial{t}}$ and $\frac{\partial^{2}}{\partial{x^{2}}}$ are interpreted in a distributional sense;\\
(H3)$\frac{\partial}{\partial{t}}h^{i}(0,t)=\frac{\partial}{\partial{t}}h^{i}(1,t)=0$ for $t\geq 0$;\\
(H4)$\frac{\partial}{\partial{t}}(h^{2}-h^{1})\geq 0$.\\
We also assume that the coefficients:
$f,\sigma(x,t,u(x,t)): [0,1]\times R_{+} \times R \rightarrow R$ are measurable and satisfy:\\
$(F):$ $f,\sigma$ are of class of $C^1$ with bounded derivatives with respect to the third element and $\sigma$ is bounded.

The following is the definition of the solution to a parabolic SPDE with two reflecting walls $h^{1}$, $h^{2}$.
\begin{definition}
A triplet $(u,\eta,\xi)$ defined on a filtered probability space \\
$(\Omega, P,\mathcal{F};\{\mathcal{F}_{t}\} )$ is a solution to the SPDE(\ref{SPDE two walls}), denoted by $(u_{0};0,0;f,\sigma;h^{1},h^{2})$, if \\
(i) $u=\{u(x,t);(x,t)\in Q\}$ is a continuous, adapted random field (i.e. $u(x,t)$ is $\mathcal{F}_{t}$-measuralbe $\forall t\geq 0$, $x \in [0,1]$) satisfying $h^{1}(x,t)\leq u(x,t) \leq h^{2}(x,t)$, $u(0,t)=0$ and $u(1,t)=0$, a.s.;\\
(ii) $\eta(dx,dt)$ and $\xi(dx,dt)$ are positive and adapted (i.e. $\eta(B)$ and $\xi(B)$ are $\mathcal{F}_{t}$-measurable if $B\subset(0,1)\times[0,t]$) random measures on $(0,1)\times R_{+}$ satisfying\\
\begin{equation}
\eta((\theta,1-\theta)\times [0,T])< \infty, \xi((\theta,1-\theta)\times[0,T])< \infty\  a.s.
\end{equation}
for $0<\theta<\frac{1}{2}$ and $T>0$;\\
(iii) for all $t\geq 0$ and $\phi \in C_{k}^\infty{}((0,1)\times (0,\infty))$ (the set of smooth functions with compact supports) we have
\begin{eqnarray}\nonumber
(u(t),\phi)-\int_{0}^{t}(u(s),\phi^{''})ds-\int_{0}^{t}(f(y,s,u),\phi)ds-\int_{0}^{t}\int_{0}^{1}\phi \sigma(y,s,u)W(dx,ds)\\\nonumber
=(u_{0},\phi)+\int_{0}^{t}\int_{0}^{1}\phi\eta(dxds)-\int_{0}^{t}\int_{0}^{1}\phi\xi(dxds) a.s.;\\
 \label{u weak form}
\end{eqnarray}
(iv) $\int_{Q}(u(x,t)-h^{1}(x,t))\eta(dx,dt)=\int_{Q}(h^{2}(x,t)-u(x,t))\xi(dx,dt)=0$ a.s..
\end{definition}
Remarks: We note that the stochastic integral in (\ref{u weak form}) is an Ito integral with respect to the Brownian sheet $\{W(x,t); (x,t)\in [0,1]\times R_{+}\}$ defined on the canonical space $\Omega=C_{0}([0,1]\times R_{+})$ (the space of continuous functions on $[0,1]\times R_{+}$ which are zero whenever one of their arguments is zero). The Brownian sheet is equipped with its Borel $\sigma$-field $\mathcal{F}$, the filtration $\mathcal{F}_{t}=\{\sigma(W(x,s)), x\in [0,1], s\leq t\}$ and the Wiener measure $P$.\\
Next, we recall Malliavin calculus associated with white noise:\\
Let $S$ denote the set of "simple random variables" of the form
\begin{eqnarray*}
F=f(W(h_{1}),...,W(h_{n})), n\in N,
\end{eqnarray*}
where $h_{i}\in H:= L^{2}([0,1]\times R_{+})$ and $W(h_{i})$ represent the Wiener integral of $h_{i}$, $f \in C_{p}^{\infty}(R^{n}).$ For such a variable $F$, we define its derivative $DF$, a random variable with values in $L^{2}([0,1]\times R_{+})$ by 
\begin{eqnarray*}
D_{x,t}F=\Sigma_{i=1}^{n}\frac{\partial f}{\partial x_i}(W(h_{1}),...,W(h_{n}))\cdot h_{i}(x,t).
\end{eqnarray*}
We denote by $D^{1,2}$ the closure of $S$ with respect to the norm:
\begin{eqnarray*}
||F||_{1,2}=(E(F^{2}))^{\frac{1}{2}}+[E(||DF||_{L^{2}([0,1]\times R^{+})}^{2})]^{\frac{1}{2}}.
\end{eqnarray*}
$D^{1,2}$ is a Hilbert space. It is the domain of the closure of derivation operator D.\\

We go back to consider the following parabolic SPDE:
\begin{equation}
  \left\{
   \begin{aligned}
   \frac{\partial{u}}{\partial{t}}
   =\frac{\partial^{2}u}{\partial{x}^{2}}+f(x,t,u)+\sigma(x,t,u)\dot{W},\\
   u(0,t)=0,u(1,t)=0,\ for\  t\geq0,\\
   u(x,0)=u_{0}(x)\in C([0,1]),
   \end{aligned}
  \right.  \label{SPDE}
\end{equation}
where $f,\sigma$ satisfy $(F)$.

According to \cite{XZ}, we know $u$ also satisfies the integral equation:
\begin{eqnarray*}
u(x,t)&=&\int_{0}^1G_{t}(x,y)u_{0}(y)dy+\int_{0}^{t}\int_{0}^{1}G_{t-s}(x,y)f(u(y,s))dyds\\
&&+\int_0^t\int_0^1G_{t-s}(x,y)\sigma(u(y,s))W(dyds)
\end{eqnarray*}
And we have the following result from \cite{PZ}.
\begin{proposition}\cite{PZ}
For all $(x,t)\in (0,1)\times R_{+}$, $u(x,t)$ is the solution to (\ref{SPDE}), Then $u(x,t)\in D^{1,2}$ and
$D_{y,s}u(x,t)$ is the solution of SPDE:
\begin{eqnarray*}
D_{y,s}u(x,t)=G_{t-s}(x,y)\sigma(u(y,s))+\int_{s}^{t}\int_{0}^{1}G_{t-r}(x,z)f^{\prime}(u(z,r))D_{y,s}u(z,r)dzdr\\
+\int_s^t \int_0^1 G_{t-r}(x,z)\sigma^{\prime}(u(z,r))D_{y,s}(u(z,r))W(dzdr).
\end{eqnarray*}
\end{proposition}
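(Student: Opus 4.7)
The plan is to establish the result by a Picard iteration scheme combined with the closability of the Malliavin derivative $D$. First I would define the sequence $u^0(x,t)=\int_0^1 G_t(x,y)u_0(y)\,dy$ and, for $n\geq 0$,
\begin{eqnarray*}
u^{n+1}(x,t) &=& \int_0^1 G_t(x,y)u_0(y)\,dy + \int_0^t\int_0^1 G_{t-s}(x,y)f(u^n(y,s))\,dyds \\
&& + \int_0^t\int_0^1 G_{t-s}(x,y)\sigma(u^n(y,s))\,W(dy,ds).
\end{eqnarray*}
Standard estimates for the heat kernel $G$ together with assumption $(F)$ show $u^n(x,t)\to u(x,t)$ in $L^2(\Omega)$ uniformly in $(x,t)\in Q_T$, and $u$ is the unique solution of (\ref{SPDE}).

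Next I would prove by induction on $n$ that $u^n(x,t)\in D^{1,2}$ for every $(x,t)$. The base case is immediate since $u^0$ is deterministic. For the inductive step I would apply the chain rule for Malliavin derivatives to $f(u^n)$ and $\sigma(u^n)$, together with the commutation relation between $D_{y,s}$ and the stochastic integral, to obtain
\begin{eqnarray*}
D_{y,s}u^{n+1}(x,t) &=& G_{t-s}(x,y)\sigma(u^n(y,s)) + \int_s^t\int_0^1 G_{t-r}(x,z)f'(u^n(z,r))D_{y,s}u^n(z,r)\,dzdr \\
&& + \int_s^t\int_0^1 G_{t-r}(x,z)\sigma'(u^n(z,r))D_{y,s}u^n(z,r)\,W(dz,dr),
\end{eqnarray*}
for $s\leq t$, and $D_{y,s}u^{n+1}(x,t)=0$ for $s>t$. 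The boundedness of $\sigma$, $f'$, $\sigma'$ ensured by $(F)$ then yields, via the isometry of the stochastic integral and the $L^p$-integrability properties of $G$,
\begin{eqnarray*}
\sup_{n}\; E\bigl[\|Du^n(x,t)\|_{L^2([0,1]\times[0,t])}^2\bigr] < \infty,
\end{eqnarray*}
through a Gronwall argument applied to $\varphi_n(t):=\sup_{x}E\|Du^n(x,t)\|^2_H$.

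Once this uniform bound is in hand, the closability of $D$ (i.e.\ the fact that $D^{1,2}$ is a Hilbert space and $D$ is a closed operator) together with the $L^2(\Omega)$-convergence $u^n(x,t)\to u(x,t)$ implies $u(x,t)\in D^{1,2}$ and $Du^n(x,t)\to Du(x,t)$ weakly in $L^2(\Omega;H)$. Finally, passing to the limit term by term in the Picard equation for $D_{y,s}u^{n+1}(x,t)$, again using dominated convergence and the isometry of the stochastic integral combined with $(F)$, identifies the limit as the unique solution of the linear SPDE in the statement.

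The main obstacle is the Gronwall-type estimate for $E\|Du^n(x,t)\|_H^2$: because $G_{t-r}(x,z)^2$ has a non-integrable singularity on the diagonal $r=t$, one cannot directly apply Gronwall's lemma to a pointwise norm. The standard remedy is to integrate in $x$ first, exploiting $\int_0^1 G_{t-r}(x,z)^2\,dx\leq C(t-r)^{-1/2}$, so that the resulting integral kernel is integrable, and then close the estimate through a fractional (Volterra) Gronwall inequality. Controlling this singularity uniformly in $n$ and upgrading the weak convergence of $Du^n$ to a sufficient mode of convergence to identify the limiting equation is the only subtle point; the remainder of the argument is a routine application of the chain rule and stochastic-integral commutation for $D$.
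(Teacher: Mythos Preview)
The paper does not give its own proof of this proposition: it is quoted verbatim from Pardoux--Zhang \cite{PZ} and stated without argument. Your Picard-iteration scheme, with the inductive chain-rule computation of $D_{y,s}u^{n+1}$, the uniform-in-$n$ bound on $E\|Du^n(x,t)\|_H^2$ via a fractional Gronwall estimate to handle the $(t-r)^{-1/2}$ singularity of $\int G_{t-r}^2$, and the closure lemma for $D$ (Lemma~1.2.3 in \cite{N}) to pass to the limit, is exactly the standard route taken in \cite{PZ} and in Nualart's book; the sketch is correct and nothing essential is missing.
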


\section{The Main Result and The Proof} 

We consider the penalized SPDE as follows:
\begin{equation}
\left\{
   \begin{aligned}
   \frac{\partial u^{\epsilon,\delta}(x,t)}{\partial t}-\frac{\partial^{2}u^{\epsilon,\delta}(x,t)}{\partial x^{2}}+f(u^{\epsilon,\delta}(x,t))=\sigma(u^{\epsilon,\delta}(x,t))\dot{W}(x,t)\\
   +\frac{1}{\delta}(u^{\epsilon,\delta}(x,t)-h^{1}(x,t))^{-}-\frac{1}{\epsilon}(u^{\epsilon,\delta}(x,t)-h^{2}(x,t))^{+},\\
   u^{\epsilon,\delta}(0,t)=u^{\epsilon,\delta}(1,t)=0, t\geq 0,\\
   u^{\epsilon,\delta}(x,0)=u_{0}(x), \label{penalized eq}
\end{aligned}
  \right.
\end{equation}
and we can get the following proposition.

\begin{proposition}
If we have (H1),(H2), (H3), (H4) and (F). Then for any $p\geq 1, T>0$, 
$\sup_{\epsilon,\delta}E(||u^{\epsilon,\delta}||_{\infty}^{T})<\infty$ and $u^{\epsilon,\delta}$ converges uniformly on $[0,1]\times[0,T]$ to $u$ as $\epsilon, \delta \to 0$, where $u, u^{\epsilon, \delta}$ are the solutions of SPDE (\ref{SPDE two walls}) and the penalized SPDE (\ref{penalized eq}).
\end{proposition}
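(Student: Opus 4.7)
The plan is to proceed in three stages: first, uniform moment bounds on $u^{\epsilon,\delta}$; second, monotone passage to the limit in each of the two parameters, building on the one-wall theory of \cite{DP1,NP,XZ}; third, identification of the limit as the unique solution to (\ref{SPDE two walls}) via \cite{YZ1}.

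For the moment bounds, I would write $u^{\epsilon,\delta}$ in its mild form and decompose $u^{\epsilon,\delta}=Z^{\epsilon,\delta}+v^{\epsilon,\delta}$, where
\[
Z^{\epsilon,\delta}(x,t)=\int_0^t\!\int_0^1 G_{t-s}(x,y)\sigma(u^{\epsilon,\delta}(y,s))\,W(dy,ds).
\]
Since $\sigma$ is bounded by (F), the factorization method provides uniform bounds on every $L^p$-moment of $\sup_{[0,1]\times[0,T]}|Z^{\epsilon,\delta}|$. Pathwise, the remainder $v^{\epsilon,\delta}$ is a bounded weak solution of a deterministic parabolic penalized obstacle equation with shifted barriers $h^i-Z^{\epsilon,\delta}$. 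Testing it against $(v^{\epsilon,\delta}-(h^i-Z^{\epsilon,\delta}))^{\pm}$, using (H2)--(H3) to justify the integration by parts on the barriers, and exploiting the sign of the two penalization terms, one obtains a deterministic pointwise bound on $v^{\epsilon,\delta}$ in terms of $\|Z^{\epsilon,\delta}\|_\infty$, $\|h^i\|$, and the initial data. Taking expectations then yields the required uniform $L^p$-estimate.

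For the convergence, the comparison principle for parabolic SPDEs (applicable because the drift is Lipschitz by (F) and the two penalization nonlinearities are monotone) implies that $u^{\epsilon,\delta}$ is nondecreasing as $\epsilon\downarrow 0$ and nonincreasing as $\delta\downarrow 0$. I would fix $\delta>0$ and first let $\epsilon\downarrow 0$: the monotone limit $u^\delta$ is identified, using \cite{DP1} together with \cite{NP,XZ}, as the solution of the one-wall problem with upper reflecting obstacle $h^2$ and an additional lower penalization $\tfrac{1}{\delta}(u^\delta-h^1)^-$. Letting then $\delta\downarrow 0$, a second monotone limit produces a candidate $u$. The uniform moment bound together with the standard spatial and temporal H\"older regularity of stochastic convolutions against $G$ give tightness in $C([0,1]\times[0,T])$, so the pointwise monotone convergence upgrades to uniform convergence. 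Passing to the limit in the weak formulation (\ref{u weak form}) of (\ref{penalized eq}), and defining $\eta,\xi$ as the weak limits of the two penalization measures, produces a triple $(u,\eta,\xi)$ satisfying (i)--(iii) of Definition 2.1, which the uniqueness result of \cite{YZ1} then identifies with the solution of (\ref{SPDE two walls}).

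The main obstacle will be the verification of the Skorohod contact conditions (iv), namely $\int_Q(u-h^1)\,d\eta=\int_Q(h^2-u)\,d\xi=0$. This requires first establishing tightness of the penalization measures on the compact strips $(\theta,1-\theta)\times[0,T]$, which relies on (H1); then passing to the limit in products such as $\int_Q (u^{\epsilon,\delta}-h^1)^{-}(u^{\epsilon,\delta}-h^1)\,dx\,dt$ and their analogue for $h^2$. The monotonicity assumption (H4), that $h^2-h^1$ is nondecreasing in time, is precisely what prevents the two penalization measures from competing in an uncontrolled way and keeps the two contact conditions decoupled in the limit; this two-index interaction is the genuinely new difficulty compared with the one-wall case of \cite{DP1}.
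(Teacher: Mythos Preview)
Your overall architecture is sound, but there is one concrete slip and several places where your route diverges from the paper's in ways worth noting.

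\textbf{The slip.} You state that $u^{\epsilon,\delta}$ is nondecreasing as $\epsilon\downarrow 0$ and nonincreasing as $\delta\downarrow 0$. This is reversed. Increasing the upper penalization strength $1/\epsilon$ makes the drift $-\tfrac{1}{\epsilon}(u-h^2)^+$ more negative, so by the comparison theorem $u^{\epsilon,\delta}$ is \emph{nonincreasing} in $1/\epsilon$, i.e.\ nonincreasing as $\epsilon\downarrow 0$; symmetrically it is nondecreasing as $\delta\downarrow 0$. The paper uses exactly this: it fixes $\epsilon$, sends $\delta\downarrow 0$ to obtain an increasing limit $u^\epsilon=\sup_\delta u^{\epsilon,\delta}$ solving the lower one-wall problem with upper penalization, and then sends $\epsilon\downarrow 0$ so that $u^\epsilon$ decreases to $u$. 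Your proposed order (first $\epsilon\downarrow 0$, then $\delta\downarrow 0$) is in principle workable once the monotonicity is corrected, but note that your intermediate $u^\delta$ would then be an \emph{infimum}, not a supremum, and would solve the upper one-wall problem with residual lower penalization.

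\textbf{Comparison of approaches.} For the uniform moment bound you split off the stochastic convolution $Z^{\epsilon,\delta}$ and argue pathwise on the remainder. The paper instead sandwiches $u^{\epsilon,\delta}$ between two auxiliary processes $v^{\epsilon,\delta}$ and $\overline{w}^{\epsilon,\delta}$ constructed \`a la Donati-Martin--Pardoux \cite{DP}; the point of this construction is that the Lipschitz constants of the drifts of $v^{\epsilon,\delta}$ and $w^{\epsilon,\delta}$ do not involve $1/\delta$, so Lemma~6.1 of \cite{DP} gives $\sup_\delta E\|v^{\epsilon,\delta}\|_\infty^p<\infty$ and likewise for $w^{\epsilon,\delta}$ directly. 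Your energy/testing argument on the deterministic remainder would have to be written carefully to avoid the $\epsilon,\delta$-dependent Lipschitz constant sneaking into the estimate; the sign trick you mention works, but it is less immediate than the sandwich.

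For continuity and uniform convergence, the paper does not rely on tightness in $C$ from H\"older bounds. Instead it introduces a second pair of penalized equations $\hat z^{\epsilon,\delta}$ driven by the \emph{limiting} noise term $\sigma(u)\dot W$, invokes Otobe's deterministic two-obstacle result \cite{O} to get continuity of $\hat z=\lim\hat z^{\epsilon,\delta}$, applies Dini twice for uniform convergence, and then shows $\|\tilde z^{\epsilon,\delta}-\hat z^{\epsilon,\delta}\|_\infty\le\|\tilde v^{\epsilon,\delta}-\hat v\|_\infty\to 0$ by Kolmogorov-type estimates on the stochastic convolutions.

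Finally, for the contact conditions (iv) the paper avoids any tightness-of-measures argument by using the nesting $\mathrm{supp}(\eta)\subset\mathrm{supp}(\eta^\epsilon)$, which follows from the monotonicity $u^\epsilon\downarrow u$; your proposed route through weak limits of the penalization measures and the role of (H4) is plausible but is not how the paper proceeds, and (H4) is not explicitly invoked in this proof.
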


\begin{proof}
Let $u^{\epsilon,\delta}$ be the solution to the penalized SPDE (\ref{penalized eq}).\\
Step 1: we prove that there exists $u(x,t)$ such that
\begin{equation}
u:=\lim_{\epsilon \downarrow 0}u^{\epsilon}=\lim_{\epsilon \downarrow 0}\lim_{\delta \downarrow 0}u^{\epsilon,\delta} a.s.
\end{equation}
First fix $\epsilon$,\\
let $v^{\epsilon,\delta}$ be the solution of equation:
\begin{equation}
\left\{
   \begin{aligned}
&&\frac{\partial v^{\epsilon,\delta}(x,t)}{\partial t}-\frac{\partial^{2}v^{\epsilon,\delta}(x,t)}{\partial x^{2}}+f(v^{\epsilon,\delta}(x,t))=\sigma(u^{\epsilon,\delta}(x,t))\dot{W}(x,t)\\
&& \ -\frac{1}{\epsilon}(u^{\epsilon,\delta}(x,t)-h^{2}(x,t))^{+},\\
&&v^{\epsilon,\delta}(x,0)=u_{0}(x), v^{\epsilon,\delta}(0,t)=v^{\epsilon,\delta}(1,t)=0.
\end{aligned}
  \right.
\end{equation}
Then $z^{\epsilon,\delta}=v^{\epsilon,\delta}-u^{\epsilon,\delta}$ is the unique solution in $L^{2}((0,T)\times(0,1))$ of
\begin{equation}
 \left\{
   \begin{aligned}
\frac{\partial z_{t}^{\epsilon,\delta}} {\partial t}+Az_{t}^{\epsilon,\delta}+f(v_{t}^{\epsilon,\delta})-f(u_{t}^{\epsilon,\delta})=-\frac{1}{\delta}(u^{\epsilon,\delta}(x,t)-h^{1}(x,t))^{-},\\
z^{\epsilon,\delta}(x,0)=0,  z^{\epsilon,\delta}(0,t)=z^{\epsilon,\delta}(1,t)=0. \label{z}
\end{aligned}
  \right.
\end{equation}
Multiplying Eq(\ref{z}) by $(z_{s}^{\epsilon,\delta})^{+}$ and integrating it to obtain:
\begin{eqnarray}  \nonumber
&&\int_{0}^{t}(\frac{\partial z^{\epsilon,\delta}(x,s)}{\partial s},(z^{\epsilon,\delta}(x,s))^{+})ds+\int_{0}^{t}(\frac{\partial z^{\epsilon,\delta}(x,s)}{\partial x},\frac{\partial (z^{\epsilon, \delta}(x,s))^{+}}{\partial x})ds\\ \nonumber
&&+\int_{0}^{t}(f(v^{\epsilon,\delta}(x,s))-f(u^{\epsilon,\delta}(x,s)),(z^{\epsilon,\delta}(x,s))^{+})ds\\
&=&-\frac{1}{\delta}\int_{0}^{t}((u^{\epsilon,\delta}(x,s)-h^{1}(x,s))^{-},(z^{\epsilon,\delta}(x,s))^{+})ds.
\end{eqnarray}
According to Bensoussan and Lions \cite{BL} (Lemma 6.1, P132),
$(z_{s}^{\epsilon,\delta})^{+} \in L^{2}(0,T;V)\cap C([0,T];H)$ a.s.
\begin{eqnarray*}
\int_{0}^{t}(\frac{\partial}{\partial s}z_{s}^{\epsilon,\delta},(z_{s}^{\epsilon,\delta})^{+})ds=\frac{1}{2}|(z_{t}^{\epsilon,\delta})^{+}|_{H}^{2}
\end{eqnarray*}
and similarly
\begin{eqnarray*}
\int_{0}^{t}(\frac {\partial} {\partial x}z_{s}^{\epsilon,\delta},\frac{\partial}{\partial x}(z_{s}^{\epsilon,\delta})^{+})ds=\int_{0}^{t}|\frac{\partial}{\partial x}(z_{s}^{\epsilon,\delta})^{+}|^{2}ds\geq 0,
\end{eqnarray*}
and by Lipschitz continuity of $f$, we have
$$\int_{0}^{t}(f(v^{\epsilon,\delta}(x,s))-f(u^{\epsilon,\delta}(x,s)),(z^{\epsilon,\delta}(x,s))^{+})ds
\geq -c\int_{0}^{t}|(z^{\epsilon,\delta}(x,s))^{+}|_{H}^{2}ds,$$
and we deduce that
\begin{eqnarray*}
0 &&\geq \frac{1}{2}|(z^{\epsilon,\delta}(x,t))^{+}|_{H}^{2}+\int_{0}^{t}|\frac{\partial(z^{\epsilon,\delta}(x,s))^{+}}{\partial x}|_{H}^{2}ds-c\int_{0}^{t}|(z^{\epsilon,\delta}(x,s))^{+}|_{H}^{2}ds\\
&&\geq \frac{1}{2}|(z^{\epsilon,\delta}(x,t))^{+}|_{H}^{2}-c\int_{0}^{t}|z^{\epsilon,\delta}(x,s)^{+}|_{H}^{2}ds.
\end{eqnarray*}
Hence,
\begin{equation}
c\int_{0}^{t}|(z^{\epsilon,\delta}(x,s))^{+}|_{H}^{2}ds\geq \frac{1}{2}|z^{\epsilon,\delta}(x,t)^{+}|_{H}^{2}
\end{equation}
From Gronwall's Lemma:
$|(z^{\epsilon,\delta}(x,t))^{+}|_{H}^{2}=0$, $\forall t\geq 0. a.s.$\\
Then, 
\begin{eqnarray}
u^{\epsilon,\delta}(x,t)\geq v^{\epsilon,\delta}(x,t), \forall x\in [0,1], t\geq 0. a.s.  \label{u's lower bound}
\end{eqnarray}
From Theorem 3.1 in $\cite{DP}$, we get that
the following equation has a unique solution $\{w^{\epsilon,\delta}(x,t);x\in[0,1],t\geq 0\}$:
\begin{equation*}
\left\{
   \begin{aligned}
   &&\frac{\partial w^{\epsilon,\delta}(x,t)} {\partial t}-\frac{\partial^{2}w^{\epsilon,\delta}(x,t)}{\partial x^{2}}+f(w^{\epsilon,\delta}(x,t)+\sup_{s\leq t, y\in[0,1]}(w^{\epsilon,\delta}(y,s)-h^{1}(y,s))^{-})\\
&&=\sigma(u^{\epsilon,\delta}(x,t)) \dot{W}(x,t)-\frac{1}{\epsilon}(u^{\epsilon,\delta}(x,t)-h^{2}(x,t))^{+},\\
&&w^{\epsilon,\delta}(\cdot , 0)=u_{0}, w^{\epsilon,\delta}(0,t)=w^{\epsilon,\delta}(1,t)=0.
\end{aligned}
  \right.
\end{equation*}

We set
\begin{equation}
\overline{w}^{\epsilon,\delta}(x,t)=w^{\epsilon,\delta}(x,t)+\sup_{s\geq t,y\in[0,1]}(w^{\epsilon,\delta}(y,s)-h^{1}(y,s))^{-}
=w^{\epsilon,\delta}(x,t)+\Phi_{t}^{\epsilon,\delta}
\end{equation}
$\overline{w}^{\epsilon,\delta}(x,t)-h^{1}(x,t)\geq 0$ and $\Phi_{t}^{\epsilon,\delta}$ is an increasing process.\\
For any $T>0$, $\overline{z}^{\epsilon,\delta}=u^{\epsilon,\delta}-\overline{w}^{\epsilon,\delta}$ is the unique solution in $L^{2}((0,T);H^{1}(0,1))$ of
\begin{equation*}
\left\{
   \begin{aligned}
   &&\frac{\partial \overline{z}^{\epsilon,\delta}(x,t)} {\partial t}+A\overline{z}^{\epsilon,\delta}(x,t)+f(u^{\epsilon,\delta}(x,t))-f(\overline{w}^{\epsilon,\delta}(x,t))+\frac{d\Phi_{t}^{\epsilon,\delta}} {dt}\\
   &&=\frac{1}{\delta}(u^{\epsilon,\delta}(x,t)-h^{1}(x,t))^{-},\\
&& \overline{z}^{\epsilon,\delta}(\cdot ,0)=0,\\
&& \overline{z}^{\epsilon,\delta}(0,t)=\overline{z}^{\epsilon,\delta}(1,t)=-\Phi_{t}^{\epsilon,\delta}.
\end{aligned}
  \right.
\end{equation*}
Multiplying this equation by $(\overline{z}^{\epsilon,\delta}(x,s))^{+}$, we obtain by the same arguments as above:
\begin{eqnarray}\nonumber
&&\int_{0}^{t}(\frac{\partial \overline{z}^{\epsilon,\delta}(x,s)} {\partial s},(\overline{z}^{\epsilon,\delta}(x,s))^{+})ds
+\int_{0}^{t}(\frac{\partial\overline{z}^{\epsilon,\delta}(x,s)}{\partial x},\frac{\partial(\overline{z}^{\epsilon,\delta}(x,s))^{+}}{\partial x})ds\\ \nonumber
&& +\int_{0}^{t}(f(u^{\epsilon,\delta}(x,s))-f(\overline{w}^{\epsilon,\delta}(x,s)),(\overline{z}^{\epsilon,\delta}(x,s))^{+})ds\\\nonumber
&&+\int_{0}^{t}\int_{0}^{1}(\overline{z}^{\epsilon,\delta}(x,s))^{+}dxd\Phi_{s}^{\epsilon,\delta}\\
&&=\frac{1}{\delta}\int_{0}^{t}((u^{\epsilon,\delta}(x,s)-h^{1}(x,s))^{-},(\overline{z}^{\epsilon,\delta}(x,s))^{+})ds
\end{eqnarray}
The right-hand side of the above equality is zero because $(\overline{z}^{\epsilon,\delta}(x,s))^{+}>0$ implies $u^{\epsilon,\delta}(x,s)-h^{1}(x,s)>\overline{w}^{\epsilon,\delta}(x,s)-h^{1}(x,s)\geq 0$.\\
Hence we again deduce from Gronwall's Lemma:
\begin{eqnarray}
u^{\epsilon,\delta}(x,t)\leq \overline{w}^{\epsilon,\delta}(x,t) \label{u's upper bound}
\end{eqnarray}
By (\ref{u's lower bound}),(\ref{u's upper bound}),
\begin{eqnarray}\nonumber
|u^{\epsilon,\delta}(x,t)| &\leq& |v^{\epsilon,\delta}(x,t)|+|w^{\epsilon,\delta}(x,t)|+\sup_{s\leq t,y\in[0,1]}(w^{\epsilon,\delta}(y,s)-h^{1}(y,s))^{-}\\
&&\leq |v^{\epsilon,\delta}(x,t)|+2\sup_{s\leq t,y\in[0,t]}[|w^{\epsilon,\delta}(y,s)|+|h^{1}(y,s)|].
\end{eqnarray}
From Lemma 6.1 in \cite{DP}, for arbitrarily large $p$ and any $T>0$, consider that $f^{\prime} (v^{\epsilon,\delta}(x,t))=f(v^{\epsilon,\delta}(x,t))+\frac{1}{\epsilon}(u^{\epsilon,\delta}(x,t)-h^2(x,t))^{+}$ is Lipschitz continuous with respect to 
$v^{\epsilon, \delta}$ and
$f^{\prime}(w^{\epsilon,\delta}(x,t)+\sup_{s \geq t, y\in [0,1]}(w^{\epsilon,\delta}(y,s)-h^1(y,s))^{-})=\\
f(w^{\epsilon,\delta}(x,t)+\sup_{s \geq t, y\in [0,1]}(w^{\epsilon,\delta}(y,s)-h^1(y,s))^{-})+\frac{1}{\epsilon}(u^{\epsilon,\delta}(x,t)-h^2(x,t))^{+}$
is Lipschitz continuous with respect to $w^{\epsilon,\delta}(x,t)+\sup_{s \geq t, y\in [0,1]}(w^{\epsilon,\delta}(y,s)-h^1(y,s))^{-}$,
we have that\\
$\sup_{\delta}E[\sup_{(x,t)\in \overline{Q}_{T}}|v^{\epsilon,\delta}(x,t)|^{p}]<\infty$
and
$\sup_{\delta}E[\sup_{(x,t)\in \overline{Q}_{T}}|w^{\epsilon,\delta}(x,t)|^{p}]<\infty,$\\
which imply
\begin{eqnarray}
\sup_{\delta}E[\sup_{(x,t)\in \overline{Q}_{T}}|u^{\epsilon,\delta}(x,t)|^{p}]<\infty.
\end{eqnarray}
So $u^{\epsilon}=\sup_{\delta}u^{\epsilon,\delta}$ is a.s. bounded on $\overline{Q}_{T}$. \\
Let
\begin{equation}
\eta^{\epsilon}=\lim_{\delta \rightarrow 0}\frac{(u^{\epsilon,\delta}(x,t)-h^{1}(x,t))^{-}}{\delta}
\end{equation}
Similar as the proof of Th4.1 in \cite{DP}, $u^{\epsilon}$ is continuous and $u^{\epsilon}$ is the solution to:
\begin{equation}
\frac{\partial u^{\epsilon}} {\partial t}+Au^{\epsilon}+f(u^{\epsilon})=\sigma(u^{\epsilon})\dot{W}(x,t)+\eta^{\epsilon}(x,t)-\frac{1}{\epsilon}(u^{\epsilon}(x,t)-h^{2}(x,t))^{+}
\end{equation}
In addition, by the definition of $u^{\epsilon}$, $u^{\epsilon}\geq h^{1}$and using Theorem 1.2.6 (Comparison Theorem)$, u^{\epsilon}$ decreases when $\epsilon \rightarrow 0$.\\
Hence, there exists $u(x,t)$ such that
\begin{equation}
u:=\lim_{\epsilon \downarrow 0}u^{\epsilon}=\lim_{\epsilon \downarrow 0}\lim_{\delta \downarrow 0}u^{\epsilon,\delta} a.s.
\end{equation}
Step 2: Next we prove $u(x,t)$ is continuous.\\
Let $\tilde{v}^{\epsilon,\delta}$ be the solution of
\begin{equation}
\frac{\partial \tilde{v}^{\epsilon,\delta}}{\partial t}+A\tilde{v}^{\epsilon,\delta}=\sigma(u^{\epsilon,\delta})\dot{W},
\end{equation}
and let $\hat{v}$ be the solution of
\begin{equation}
\frac{\partial \hat{v}}{\partial t}+A \hat{v}=\sigma(u)\dot{W}.
\end{equation}
Remember
\begin{eqnarray*}
   \frac{\partial u^{\epsilon,\delta}(x,t)}{\partial t}-\frac{\partial^{2}u^{\epsilon,\delta}(x,t)}{\partial x^{2}}+f(u^{\epsilon,\delta}(x,t))=\sigma(u^{\epsilon,\delta}(x,t))\dot{W}(x,t)\\
   +\frac{1}{\delta}(u^{\epsilon,\delta}(x,t)-h^{1}(x,t))^{-}-\frac{1}{\epsilon}(u^{\epsilon,\delta}(x,t)-h^{2}(x,t))^{+},
\end{eqnarray*}
Let $\tilde z^{\epsilon,\delta}=u^{\epsilon,\delta}-\tilde{v}^{\epsilon,\delta}$,
then $\tilde z^{\epsilon,\delta}$ is the solution of
\begin{eqnarray}\nonumber
&&\frac{\partial \tilde z^{\epsilon,\delta}}{\partial t}+A\tilde z^{\epsilon,\delta}+f(\tilde z^{\epsilon,\delta}+\tilde{v}^{\epsilon,\delta})\\
&=&\frac{1}{\delta}(\tilde z^{\epsilon,\delta}+\tilde{v}^{\epsilon,\delta}-h^{1})^{-}-\frac{1}{\epsilon}(\tilde z^{\epsilon,\delta}+\tilde{v}^{\epsilon,\delta}-h^{2})^{+}.
\end{eqnarray}
Let $\hat {z}^{\epsilon,\delta}$ be the solution of
\begin{eqnarray}
\frac{\partial \hat{z}^{\epsilon,\delta}}{\partial t}+A\hat{z}^{\epsilon,\delta}+f(\hat{z}^{\epsilon,\delta}+\hat{v})=\frac{1}{\delta}(\hat{z}^{\epsilon,\delta}+\hat{v}-h^{1})^{-}-\frac{1}{\epsilon}(\hat{z}^{\epsilon,\delta}+\hat{v}-h^{2})^{+}.
\end{eqnarray}
We have 
\begin{eqnarray}
||\tilde{z}^{\epsilon,\delta}-\hat z^{\epsilon,\delta}||_{T,\infty}\leq ||\tilde{v}^{\epsilon,\delta}-\hat{v}||_{T,\infty}. \label{tilde and hat}
\end{eqnarray}
$\hat{z}^{\epsilon,\delta}$ is continuous.
According to proof of Theorem 2.1 in \cite{O} , $\hat{z}^{\epsilon,\delta}\rightarrow \hat{z}\ (continuous)$. \\
It means
$$\hat{z}=\lim_{\epsilon\rightarrow 0}\hat{z}^{\epsilon}=\lim_{\epsilon\rightarrow 0}\lim_{\delta \rightarrow 0}\hat{z}^{\epsilon,\delta}.$$
Fix $\epsilon$, $\hat{z}^{\epsilon,\delta} \uparrow \hat{z}^{\epsilon}(continuous)$, and from Dini theorem,
$\hat{z}^{\epsilon,\delta}$ uniformly converges to $\hat{z}^{\epsilon}$.
i.e.$||\hat{z}^{\epsilon,\delta}-\hat{z}^{\epsilon}||_{T,\infty}\rightarrow 0,\ \ \delta \rightarrow 0$.\\
Since $\hat{z}^{\epsilon}\downarrow \hat{z}$, and from Dini theorem, $\hat{z}^{\epsilon}$ uniformly converges to $\hat{z}$. 
i.e.$||\hat{z}^{\epsilon}-\hat{z}||_{T,\infty} \rightarrow 0$.\\
Then we get
\begin{eqnarray}\nonumber
&&||\hat{z}^{\epsilon,\delta}-\hat{z}||_{T,\infty}=||\hat{z}^{\epsilon,\delta}-\hat{z}^{\epsilon}+\hat{z}^{\epsilon}-\hat{z}||_{T,\infty}\leq ||\hat{z}^{\epsilon,\delta}-\hat{z}^{\epsilon}||_{T,\infty}+||\hat{z}^{\epsilon}-\hat{z}||_{T,\infty}\rightarrow 0\\
 &&(\delta \rightarrow 0, \epsilon \rightarrow 0).
\label{hat and hat}
\end{eqnarray}
i.e. $\hat{z}^{\epsilon,\delta} \rightarrow \hat{z}$ uniformly.\\
Next we prove $\tilde{v}^{\epsilon,\delta}\rightarrow \hat v$ uniformly with respect to $s,t$ as $\epsilon\rightarrow 0, \delta \rightarrow 0$:\\
Let $I(x,t)=\tilde{v}^{\epsilon,\delta}(x,t)-\hat{v}(x,t)=\int_0^t\int_0^1G_{t-s}(x,y)(\sigma(u^{\epsilon,\delta})-\sigma(u))W(dyds)$,
from the proof of Corollary 3.4 in \cite{W2},
$$E|I(x,t)-I(y,s)|)^p 
\leq
C_T E\int_0^{t\bigvee s} \int_{0}^{1}(|\sigma(u^{\epsilon,\delta})-\sigma(u)|)^p dzdr |(x,t)-(y,s)|^{\frac{p}{4}-3},
$$
and following the same calculation as in the proof of Theorem 2.1 in Xu and Zhang \cite{XZ}, we deduce
\begin{eqnarray*}
E(\sup_{x\in [0,1],t\in[0,T]}|I(x,t)|)^p 
\leq
C_T E\int_0^T\int_{0}^{1}(|\sigma(u^{\epsilon,\delta})-\sigma(u)|)^p dxdt.
\end{eqnarray*}
Again according to $u:=\lim_{\epsilon\rightarrow 0}\lim_{\delta\rightarrow 0}u^{\epsilon,\delta}$ and $\sigma(x,t,u(x,t))$ is Lipschitz continuous and bounded,
we can have 
\begin{eqnarray*}
E(\sup_{x\in [0,1],t\in [0,T]}|I(x,t)|)^p 
&& \leq
C_T E\int_0^T\int_{0}^{1}(|\sigma(u^{\epsilon,\delta})-\sigma(u)|)^p dtdx\\
&&\rightarrow 0
\end{eqnarray*}
Then we have that $\tilde{v}^{\epsilon,\delta}\rightarrow \hat v$ uniformly a.s. and again from (\ref{tilde and hat}) and (\ref{hat and hat})  we deduce that  $\tilde z^{\epsilon,\delta}\rightarrow \hat{z}$ uniformly a.s..\\
So $$\lim_{\epsilon\rightarrow 0}\lim_{\delta \rightarrow 0}u^{\epsilon,\delta}=u=\hat{z}+\hat{v}$$ is continuous.\\
Step 3: Next we prove $u(x,t)$ is the solution of
\begin{equation}
\frac{\partial u}{\partial t}+Au+f(u)=\sigma(u)\dot{W}(x,t)+\eta(x,t)-\xi(x,t).
\end{equation}
For $\psi \in C_{0}^{\infty}((0,1)\times [0,\infty))$,
\begin{eqnarray}\nonumber
&&-\int_{0}^{t}(u^{\epsilon}(x,s),\psi_{s}(s))ds-\int_{0}^{t}(u^{\epsilon}(x,s),A \psi)ds+\int_{0}^{t}(f(u^{\epsilon}),\psi)ds\\ \nonumber
&=&\int_{0}^{t}\int_{0}^{1}(\sigma(u^{\epsilon}),\psi)W(dx,ds)+\int_{0}^{t}\int_{0}^{1}\psi(x,t)(\eta^{\epsilon}(dx,dt)-\xi^{\epsilon}(dx,dt)) \\
\label{u sequence}
\end{eqnarray}
$$\eta^{\epsilon}=\lim_{\delta \rightarrow 0}\frac{(u^{\epsilon,\delta}-h^{1})^{-}}{\delta}, \xi^{\epsilon}=\frac{(u^{\epsilon}-h^{2})^{+}}{\epsilon}.$$
Let $\epsilon \rightarrow 0,$
\begin{eqnarray*}
&&-\int_{0}^{t}(u(x,s),\psi_{s})ds-\int_{0}^{t}(u(x,s),A \psi)ds+\int_{0}^{t}(f(u),\psi)ds\\
&=&\int_{0}^{t}\int_{0}^{1}(\sigma(u),\psi)W(dx,ds)+\lim_{\epsilon \rightarrow 0}\int_{0}^{t}\int_{0}^{1}\psi(x,t)(\eta^{\epsilon}(dx,dt)-\xi^{\epsilon}(dx,dt)).
\end{eqnarray*}
Then it is clear that, under the limit $\epsilon\rightarrow 0$,
$lim_{\epsilon\rightarrow 0}(\eta^{\epsilon}-\xi^{\epsilon})$ exists in the sense of Schwartz distribution a.s..\\
Because $u^{\epsilon}$ uniformly converges to $u$, similarly as Theorem 3.1 in \cite{YZ1} we get $\eta^{\epsilon} \rightarrow \eta$ and $\xi^{\epsilon}\rightarrow \xi$.
Let $\epsilon \rightarrow 0$ to see that $(u,\eta,\xi)$ satisfies condition (iii) of Def 3.2.1.\\
Multiplying both sides of Eq(\ref{u sequence}) by $\epsilon$ and letting $\epsilon\rightarrow 0$,
\begin{eqnarray}
\lim_{\epsilon\rightarrow 0} \int_{0}^{t}\int_{0}^{1}\psi(x,t)(\epsilon \lim_{\delta\rightarrow 0}\frac{(u^{\epsilon,\delta}-h^{1})^{-}}{\delta}-(u^{\epsilon}-h^{2})^{+})(dx,dt)=0
\end{eqnarray}
then $\int_{0}^{t}\int_{0}^{1}\psi(x,t)(u-h^{2})^{+}(dx,dt)=0$, and we can get $u\leq h^{2}$. And since $u^{\epsilon}\geq h^{1}$,
then $u\geq h^{1}$. Combining these two inequalities, we have $h^{1}\leq u\leq h^{2}$.\\
Finally, we can show that $\int_{Q_{T}}(u-h^{1})d\eta=\int_{Q_{T}}(h^{2}-u)d\xi=0$.\\
For $\epsilon \leq \epsilon^{'}, \ u^{\epsilon}\geq u^{\epsilon^{'}}$,
therefore $supp(\eta^{\epsilon})\subset supp(\eta^{\epsilon^{'}})$,
we get $supp(\eta)\subset supp(\eta^{\epsilon})$.
we know $u^{\epsilon}-h^{1}\leq 0$ on $supp \eta^{\epsilon}$.
So $\int_{Q_{T}}(u^{\epsilon}-h^{1})d\eta\leq 0$.
Then$\int_{Q_{T}}(u-h^{1})d\eta= 0$.
Because $\xi^{\epsilon}=\frac{1}{\epsilon}(u^{\epsilon}-h^{2})^{+},$ then $0\geq \int_{Q_{T}}(u^{\epsilon}-h^{2})d\xi^{\epsilon}\geq 0$. And since $\xi^{\epsilon} \rightarrow \xi$, then $\int_{Q_{T}} (u-h^{2})d\xi=0$.\\
By taking $\psi \in C_{0}^{\infty}((0,1)\times (0,\infty))$ such that $\psi =1$ on $(supp \eta) \cap ((\delta, 1-\delta)\times [0,T])$ and $\psi=0$ on $supp \xi$. Hence, in view of (\ref{u weak form}),
 $$\eta([\delta,1-\delta]\times [0,T])= \int_{0}^{T}\int_{0}^{1}\psi(x,t)\eta(dx,dt)-\int_0^T \phi(x,t)\xi(dx,dt)<\infty$$ for all $0<\delta<\frac{1}{2}$ and $T>0$.
Similarly  we can get $\xi([\delta,1-\delta]\times[0,T])<\infty$ for all $0<\delta<\frac{1}{2}$ and $T>0$. $\Box$
\end{proof}
\\

Set $k_1(u^{\epsilon,\delta}-h^1(x,t))=arctan[(u^{\epsilon,\delta}-h^1(x,t))\wedge 0]^{2}$ and $k_{2}(u^{\epsilon,\delta}-h^2(x,t))=arctan[(h^2(x,t)-u^{\epsilon,\delta})\wedge 0]^{2}$. Consider the following penalized SPDE:
\begin{equation}
\left\{
   \begin{aligned}
   \frac{\partial u^{\epsilon,\delta}(x,t)}{\partial t}-\frac{\partial^{2}u^{\epsilon,\delta}(x,t)}{\partial x^{2}}+f(u^{\epsilon,\delta}(x,t))=\sigma(u^{\epsilon,\delta}(x,t))\dot{W}(x,t)\\
   +\frac{1}{\delta}k_1(u^{\epsilon,\delta}-h^1(x,t))-\frac{1}{\epsilon}k_{2}(u^{\epsilon,\delta}-h^2(x,t)),\\
   u^{\epsilon,\delta}(x,0)=u_{0}(x).
\end{aligned}
  \right.
\end{equation}
Notice that the corresponding penalized elements in Proposition 3.3.1 are $(u^{\epsilon,\delta}-h^{1}(x,t))^-$ and$(u^{\epsilon,\delta}-h^2(x,t))^+$.
It was shown in \cite{DMZ}(also in \cite{DP1}) that the choice of $k_1,k_2$ does not change the limit of $u^{\epsilon,\delta}$, but makes $k_{1},k_{2}$ differentiable with respect to $u^{\epsilon,\delta}$.

\begin{proposition}
For all $(x,t) \in [0,1]\times R^{+}$, $u(x,t) \in D_{1,p}$ and there exists a subsequence of $Du^{\epsilon,\delta}(x,t)$ that converges to $Du(x,t)$ in the weak topology of $L^{p}(\Omega;H)$ and $H=L^{2}([0,1]\times R^{+})$.
\end{proposition}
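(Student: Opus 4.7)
\proof (Plan.) The strategy has four steps: produce Malliavin derivatives at the penalized level, get estimates that are uniform in the penalization parameters, extract a weak limit by compactness, and identify it as $Du$ via closability of $D$.

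First, for each fixed $\epsilon,\delta>0$ the penalized drift $-f(u)+\tfrac{1}{\delta}k_{1}(u-h^{1})-\tfrac{1}{\epsilon}k_{2}(u-h^{2})$ is $C^{1}$ in $u$ with bounded derivative, and $\sigma$ satisfies $(F)$. Repeating the argument of Proposition 2.2 (Pardoux--Zhang) for this smooth SPDE, one obtains that $u^{\epsilon,\delta}(x,t)\in D^{1,p}$ for every $p\ge 2$, and that $D_{y,s}u^{\epsilon,\delta}(x,t)$ satisfies the linear mild equation
\begin{align*}
D_{y,s}u^{\epsilon,\delta}(x,t) &= G_{t-s}(x,y)\sigma(u^{\epsilon,\delta}(y,s)) \\
&\quad + \int_{s}^{t}\!\!\int_{0}^{1}G_{t-r}(x,z)\,b^{\epsilon,\delta}(z,r)\,D_{y,s}u^{\epsilon,\delta}(z,r)\,dz\,dr\\
&\quad + \int_{s}^{t}\!\!\int_{0}^{1}G_{t-r}(x,z)\sigma'(u^{\epsilon,\delta}(z,r))\,D_{y,s}u^{\epsilon,\delta}(z,r)\,W(dz,dr),
\end{align*}
where $b^{\epsilon,\delta}:=-f'(u^{\epsilon,\delta})+\tfrac{1}{\delta}k_{1}'(u^{\epsilon,\delta}-h^{1})-\tfrac{1}{\epsilon}k_{2}'(u^{\epsilon,\delta}-h^{2})$.

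The second step, which is the main technical obstacle, is to bound $\sup_{\epsilon,\delta}E\|Du^{\epsilon,\delta}(x,t)\|_{H}^{p}$. The coefficient $b^{\epsilon,\delta}$ is \emph{not} uniformly bounded in $(\epsilon,\delta)$, so one cannot simply invoke Gronwall. The crucial observation is that $k_{1}$ and $k_{2}$ are chosen so that $k_{1}'(u-h^{1})\le 0$ and $k_{2}'(u-h^{2})\ge 0$; hence the two penalizing contributions to $b^{\epsilon,\delta}$ are both non-positive. Working with the strong form of the derivative equation, multiplying by $D_{y,s}u^{\epsilon,\delta}(x,t)$ and integrating in $x$ gives an energy identity in which the $1/\delta$ and $1/\epsilon$ terms appear as $-\tfrac{1}{\delta}\int k_{1}'(\cdot)|D u|^{2}$ and $+\tfrac{1}{\epsilon}\int k_{2}'(\cdot)|D u|^{2}$, i.e.\ with a favourable sign. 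They can therefore be dropped, and only the bounded contributions from $f'$, $\sigma'$ and the free term $G_{t-s}(x,y)\sigma(u^{\epsilon,\delta}(y,s))$ remain. Using Burkholder--Davis--Gundy for the stochastic term, the boundedness of $\sigma$, the standard $L^{p}$ estimates on the heat kernel $\int_{0}^{t}\!\!\int_{0}^{1}G_{t-s}(x,y)^{2}dy\,ds<\infty$, and Gronwall's inequality, one obtains
\begin{equation*}
\sup_{\epsilon,\delta}\,E\|Du^{\epsilon,\delta}(x,t)\|_{H}^{p}<\infty
\end{equation*}
for every $p\ge 2$ and every $(x,t)\in[0,1]\times\mathbb{R}_{+}$.

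Third, since $L^{p}(\Omega;H)$ is reflexive, the Banach--Alaoglu theorem yields a subsequence (still denoted $(\epsilon,\delta)$) and $\Phi(x,t)\in L^{p}(\Omega;H)$ with $Du^{\epsilon,\delta}(x,t)\rightharpoonup \Phi(x,t)$ weakly. Finally, Proposition~3.3.1 gives $u^{\epsilon,\delta}(x,t)\to u(x,t)$ uniformly a.s., and together with the uniform $L^{p}$ bound on $u^{\epsilon,\delta}$ this implies convergence in $L^{p}(\Omega)$. The Malliavin derivation operator $D:L^{p}(\Omega)\to L^{p}(\Omega;H)$ is closable (its graph is weakly closed), so from $u^{\epsilon,\delta}\to u$ in $L^{p}(\Omega)$ and $Du^{\epsilon,\delta}\rightharpoonup\Phi$ in $L^{p}(\Omega;H)$ one concludes $u(x,t)\in D^{1,p}$ and $Du(x,t)=\Phi(x,t)$, which completes the proof.\eproof
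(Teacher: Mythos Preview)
Your overall architecture is the paper's: differentiate the penalized equation, get a uniform bound on $E\|Du^{\epsilon,\delta}(x,t)\|_H^p$, then use weak compactness and the closedness of $D$ (Lemma~1.2.3 in Nualart). Your identification of the favourable sign $\frac{1}{\delta}k_1'-\frac{1}{\epsilon}k_2'\le 0$ is also the paper's key observation. The divergence is in Step~2, and there the energy argument you sketch does not go through as stated.

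The paper does not use an energy identity. It factors $D_{y,s}u^{\epsilon,\delta}(x,t)=\sigma(u^{\epsilon,\delta}(y,s))\,S_{y,s}^{\epsilon,\delta}(x,t)$ and applies the comparison theorem for SPDEs twice: first, from the non-negative initial datum $G_{t-s}(x,y)$, to get $S^{\epsilon,\delta}_{y,s}\ge 0$; second, using $\frac{1}{\delta}k_1'-\frac{1}{\epsilon}k_2'\le 0$ together with $S^{\epsilon,\delta}\ge 0$, to obtain the pointwise domination $0\le S^{\epsilon,\delta}_{y,s}(x,t)\le\widehat S^{\epsilon,\delta}_{y,s}(x,t)$, where $\widehat S^{\epsilon,\delta}$ solves the same mild equation with the penalization deleted (only $-f'$ remains in the drift). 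Since $\widehat S^{\epsilon,\delta}$ now has uniformly bounded coefficients, a direct mild-form estimate (heat-kernel bounds, Burkholder's inequality for Hilbert-space-valued stochastic integrals, and a Gronwall argument with kernel $(t-r)^a$) yields $\sup_{\epsilon,\delta}E\big(\int_0^t\int_0^1|\widehat S^{\epsilon,\delta}_{y,s}(x,t)|^{2}\,dy\,ds\big)^{p}<\infty$, and hence the same for $Du^{\epsilon,\delta}$ via the boundedness of $\sigma$.

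Your energy route has two concrete problems. First, ``multiplying by $D_{y,s}u^{\epsilon,\delta}(x,t)$ and integrating in $x$'' controls $\int_0^1|D_{y,s}u^{\epsilon,\delta}(x,t)|^2\,dx$ for \emph{fixed} $(y,s)$, whereas the quantity you must bound is $\int_0^t\int_0^1|D_{y,s}u^{\epsilon,\delta}(x,t)|^2\,dy\,ds$ for \emph{fixed} $(x,t)$; there is no direct passage from one to the other. Second, in the strong form the initial condition at $t=s$ is $\sigma(u^{\epsilon,\delta}(y,s))\,\delta_y(\cdot)$, which is not in $L^2_x$, so the energy identity cannot be launched at $t=s$ and any bound extracted from it blows up there. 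Both difficulties disappear once one has the pointwise comparison $0\le S^{\epsilon,\delta}\le\widehat S^{\epsilon,\delta}$: the estimate is then carried out entirely on the mild form of the $\widehat S^{\epsilon,\delta}$-equation, and no energy argument is needed.
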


\begin{proof}
Let $u^{\epsilon,\delta}$ be the solution to the following SPDE:
\begin{equation}
\left\{
   \begin{aligned}
   \frac{\partial u^{\epsilon,\delta}(x,t)}{\partial t}-\frac{\partial^{2}u^{\epsilon,\delta}(x,t)}{\partial x^{2}}+f(u^{\epsilon,\delta}(x,t))=\sigma(u^{\epsilon,\delta}(x,t))\dot{W}(x,t)\\
   +\frac{1}{\delta}k_1(u^{\epsilon,\delta}-h^1(x,t))-\frac{1}{\epsilon}k_{2}(u^{\epsilon,\delta}-h^2(x,t)),\\
   u^{\epsilon,\delta}(x,0)=u_{0}(x).
\end{aligned}
  \right.
\end{equation}
Then it can be expressed as,
\begin{eqnarray*}
u^{\epsilon,\delta}(x,t)
&=&\int_{0}^{t}G_{t}(x,y)u_{0}(y)dy+\int_{0}^{t}\int_{0}^{1}G_{t-s}(x,y)\sigma(u^{\epsilon,\delta}(x,t))W(dyds)\\
&&+\int_{0}^{t}\int_{0}^{1}G_{t-s}(x,y)[-f(u^{\epsilon,\delta}(x,t))+\frac{1}{\delta}k_{1}-\frac{1}{\epsilon}k_{2}]dyds,
\end{eqnarray*}
where $G_{t}(x,y)$ is the heat kernel.\\
And we also know from Section 3.2 that:
\begin{eqnarray*}
D_{y,s}u^{\epsilon,\delta}(x,t)
&=&G_{t-s}(x,y)\sigma(u^{\epsilon,\delta}(y,s))\\
&&+\int_{s}^{t}\int_{0}^{1}G_{t-r}(x,z)\sigma^{'}(u^{\epsilon,\delta}(z,r))D_{y,s}(u^{\epsilon,\delta}(z,r))W(dzdr) \\
&&+\int_{s}^{t}\int_{0}^{1}G_{t-r}(x,z)[-f^{'}+\frac{1}{\delta}k_{1}^{'}-\frac{1}{\epsilon}k_{2}^{'}]D_{y,s}(u^{\epsilon,\delta}(z,r))dzdr
\end{eqnarray*}
Let
\begin{eqnarray}D_{y,s}u^{\epsilon,\delta}(x,t)=\sigma(u^{\epsilon,\delta}(y,s))S_{y,s}^{\epsilon,\delta}(x,t) \label{Du separation}
\end{eqnarray}
and then 
$S_{y,s}^{\epsilon,\delta}(x,t)$ is the solution of
\begin{eqnarray*}
S_{y,s}^{\epsilon,\delta}(x,t)&=&G_{t-s}(x,y)+\int_{s}^{t}\int_{0}^{1}G_{t-r}(x,z)\sigma^{'}(u^{\epsilon,\delta}(z,r))S_{y,s}^{\epsilon,\delta}(z,r)W(dzdr)\\
&&+\int_{s}^{t}\int_{0}^{1}G_{t-r}(x,z)[-f^{'}(u^{\epsilon,\delta}(z,r))+\frac{1}{\delta}k_{1}^{'}-\frac{1}{\epsilon}k_{2}^{'}]S_{y,s}^{\epsilon,\delta}(z,r)dyds.
\end{eqnarray*}

According to Theorem 1.2.6 (the comparison theorem of SPDE), we have the following properties:\\
(i)$S_{y,s}^{\epsilon,\delta}\geq 0$,\\
(ii)$0\leq S_{y,s}^{\epsilon,\delta}(x,t)\leq \widehat{S}_{y,s}^{\epsilon,\delta}(x,t)$ and $\widehat{S}_{y,s}^{\epsilon,\delta}(x,t)$ is the solution of SPDE:
\begin{eqnarray}\nonumber
\widehat{S}_{y,s}^{\epsilon,\delta}&=&G_{t-s}(x,y)+\int_{s}^{t}\int_{0}^{1}G_{t-r}(x,z)\sigma^{'}(u^{\epsilon,\delta}(z,r))\widehat{S}_{y,s}^{\epsilon,\delta}(z,r)W(dzdr)\\
&&+\int_{s}^{t}\int_{0}^{1}G_{t-r}(x,z)[-f^{'}(u^{\epsilon,\delta}(z,r))]\widehat{S}_{y,s}^{\epsilon,\delta}(z,r)dzdr. \label{S equation}
\end{eqnarray}
Consequently,
\begin{equation}
|D_{y,s}u^{\epsilon,\delta}(x,t)|= |\sigma(u^{\epsilon,\delta}(y,s))|S_{y,s}^{\epsilon,\delta}(x,t)\leq |\sigma(u^{\epsilon,\delta}(y,s))|\widehat{S}_{y,s}^{\epsilon,\delta}(x,t).
\end{equation}
According to Proposition 2.1 in \cite{YZ2}, we already have the following:
\begin{equation}
\sup_{\epsilon,\delta}E[\sup_{(y,s)\in[0,1]\times[0,T]}|u^{\epsilon,\delta}(y,s)|^{p}]<\infty.
\end{equation}
We just need to prove
\begin{equation}
\sup_{\epsilon,\delta}E(\int_{0}^{t}\int_{0}^{1}|\widehat{S}_{y,s}^{\epsilon,\delta}|^{2}dyds)^{p}< \infty,
 \forall p\geq 1,
\end{equation}
according to Theorem 1.2.2 (Lemma 1.2.3 in \cite{N}).\\
We know from (\ref{S equation}):
\begin{eqnarray*}
&&|\widehat{S}_{y,s}^{\epsilon,\delta}(x,t)|^{2}\\
&\leq &
c \{|G_{t-s}(x,y)|^{2}+|\int_{s}^{t}\int_{0}^{1}G_{t-r}(z,r)\sigma^{'}(u^{\epsilon,\delta}(z,r))\widehat{S}_{y,s}^{\epsilon,\delta}(z,r)W(dzdr)|^{2}\\
&&+|\int_{s}^{t}\int_{0}^{1}G_{t-r}(x,z)[-f^{'}(u^{\epsilon,\delta}(z,r))]\widehat{S}_{y,s}^{\epsilon,\delta}(z,r)dzdr|^{2}\}.
\end{eqnarray*}
Then,
\begin{eqnarray*}
&&|\int_{0}^{t}\int_{0}^{1}|\widehat{S}_{y,s}^{\epsilon,\delta}(x,t)|^{2}dyds|^{p} \\
&\leq& c_{p} \{(\int_{0}^{t}\int_{0}^{1}|G_{t-s}(x,y)|^{2}dyds)^{p}\\
&&+(\int_{0}^{t}\int_{0}^{1}|\int_{s}^{t}\int_{0}^{1}G_{t-r}(x,z)\sigma^{'}(u^{\epsilon,\delta}(z,r))\widehat{S}_{y,s}^{\epsilon,\delta}(z,r)W(dzdr)|^{2}dyds)^{p}\\
&&+(\int_{0}^{t}\int_{0}^{1}|\int_{s}^{t}\int_{0}^{1}G_{t-r}(x,z)[-f^{'}(u^{\epsilon,\delta}(z,r))]\widehat{S}_{y,s}^{\epsilon,\delta}(z,r)dzdr|^{2}dyds)^{p}\}.
\end{eqnarray*}

We shall use Burkholder's inequality for Hilbert space (see \cite{BP1} Inequality(4.18) P41) to get the following:
\begin{eqnarray*}
&&E|\int_{0}^{t}\int_{0}^{1}|\widehat{S}_{y,s}^{\epsilon,\delta}(x,t)|^{2}dyds|^{p}\\
&\leq &c_{p} \{M\\
&&+E(\int_{0}^{t}\int_{0}^{1}|\int_{s}^{t}\int_{0}^{1}G_{t-r}(x,z)\sigma^{'}(u^{\epsilon,\delta}(z,r))\widehat{S}_{y,s}^{\epsilon,\delta}(z,r)W(dzdr)|^{2}dyds)^{p}\\
&&+E(\int_{0}^{t}\int_{0}^{1}|\int_{s}^{t}\int_{0}^{1}G_{t-r}(x,z)[-f^{'}(u^{\epsilon,\delta}(z,r))]\widehat{S}_{y,s}^{\epsilon,\delta}(z,r)dzdr|^{2}dyds)^{p}\}\\
&\leq &c_{p} \{M\\
&&+KE(\int_{0}^{t}\int_{0}^{1}(\int_{0}^{r}\int_{0}^{1}G_{t-r}^{2}(x,z)(\sigma^{'}(u^{\epsilon,\delta}(z,r)))^{2}(\widehat{S}_{y,s}^{\epsilon,\delta}(z,r))^{2}dyds)dzdr)^{p}\\
&&+E(\int_{0}^{t}\int_{0}^{1}|\int_{s}^{t}\int_{0}^{1}G_{t-r}^{2}(x,z)[-f^{'}(u^{\epsilon,\delta}(z,r))]^{2}(\widehat{S}_{y,s}^{\epsilon,\delta}(z,r))^{2}dzdr|dyds)^{p}\}\\
&\leq& c_{p}\{M+KE|\int_{0}^{t}\int_{0}^{1}(\int_{0}^{r}\int_{0}^{1}G_{t-r}^{2}(x,z)(\widehat{S}_{y,s}^{\epsilon,\delta}(z,r))^{2}dyds)dzdr|^{p}\}\\
&=& c_{p} \{M+KE(\int_{0}^{t}\int_{0}^{1}G_{t-r}^{2}(x,z)[\int_{0}^{r}\int_{0}^{1}(\widehat{S}_{y,s}^{\epsilon,\delta}(z,r))^{2}dyds]dzdr)^{p}\}\\
&\leq& c_{p}M
+c_{p}KE\{(\int_{0}^{t}\int_{0}^{1}G_{t-r}^{2\epsilon q}dzdr)^{\frac{p}{q}}\cdot \int_{0}^{t}\int_{0}^{1}G_{t-r}^{2(1-\epsilon)p}[\int_{0}^{t}\int_{0}^{1}(\widehat{S}_{y,s}^{\epsilon,\delta}(z,r))^{2}dyds]^{p}dzdr\},
\end{eqnarray*}
where $\epsilon \in (1-\frac{3}{2p},\frac{3}{2}-\frac{3}{2p}), q=\frac{p}{p-1}$.\\
Then,
\begin{eqnarray*}
&&E|\int_{0}^{t}\int_{0}^{1}|\widehat{S}_{y,s}^{\epsilon,\delta}(x,t)|^{2}dyds|^{p}\\
&\leq& c_{p}M+c_{p}KM\int_{0}^{t}\int_{0}^{1}G_{t-r}^{2(1-\epsilon) p}E[\int_{0}^{r}\int_{0}^{1}(\widehat{S}_{y,s}^{\epsilon,\delta}(z,r))^{2}dyds]^{p}dzdr\\
&\leq& c_{p}M+c_{p}KM\int_{0}^{t}\sup_{z}E(\int_{0}^{r}\int_{0}^{1}(\widehat{S}_{y,s}^{\epsilon,\delta}(z,r))^{2}dyds)^{p}(\int_{0}^{1}G_{t-r}^{2(1-\epsilon)p}dz)dr\\
&\leq& c_{p}M+c_{p}KM\int_{0}^{t}\sup_{z}E[\int_{0}^{r}\int_{0}^{1}(\widehat{S}_{y,s}^{\epsilon,\delta}(z,r))^{2}dyds]^{p}(t-r)^{a}dr
\end{eqnarray*}
where $ a=\frac{1}{2}-(1-\epsilon)p.$\\
It's equivalent to
 \begin{eqnarray}
\nonumber
&&\sup_{x}E[\int_{0}^{t}\int_{0}^{1}(\widehat{S}_{y,s}^{\epsilon,\delta}(x,t))^{2}dyds]^{p}\\
&\leq& c_{p}M+c_{p}KM\int_{0}^{t}\sup_{z}E[\int_{0}^{r}\int_{0}^{1}(\widehat{S}_{y,s}^{\epsilon,\delta}(z,r))^{2}dyds]^{p}(t-r)^{a}dr
\end{eqnarray}
Let \begin{equation}
f(t)=\sup_{x}E[\int_{0}^{t}\int_{0}^{1}(\widehat{S}_{y,s}^{\epsilon,\delta}(x,t))^{2}dyds]^{p}.
\end{equation}
Then,
\begin{equation}
f(t)\leq c_{p}M+c_{p}KM\int_{0}^{t}(t-r)^{a}f(r)dr
\end{equation}
According to Gronwall's Inequality, we have,
\begin{eqnarray*}
f(t)
&\leq& c_{p}M+\int_{0}^{t}c_{p}Mc_{p}KM(t-r)^{a}exp(\int_{r}^{t}(t-s)^{a}ds)dr\\
&=& C+\int_{0}^{t}C(t-r)^{a}e^{-\frac{1}{a+1}(t-r)^{a+1}}dr\\
&=&C+C^{'}(e^{\frac{1}{a+1}t^{a+1}}-1)\\
&<&\infty.
\end{eqnarray*}
It shows that
\begin{eqnarray}
 \sup_{x}E[\int_{0}^{t}\int_{0}^{1}(\widehat{S}_{y,s}^{\epsilon,\delta}(x,t))^{2}dyds]^{p}\leq C+C^{'}(e^{\frac{1}{a+1}t^{a+1}}-1). \label{estimation}
 \end{eqnarray} 
We can deduce from (\ref{estimation}) that: 
\begin{eqnarray*}
 \sup_{\epsilon,\delta}E[\int_{0}^{t}\int_{0}^{1}(\widehat{S}_{y,s}^{\epsilon,\delta}(x,t))^{2}dyds]^{p}<\infty, \forall p\geq 1
\end{eqnarray*}
$\Box$

\begin{theorem}
If $u$ is the solution of SPDE with two walls $(u_{0};0,0;f,\sigma;h^{1},h^{2})$ and $\sigma>0$ on $[h^1,h^2]$. Then, for all $(x_{0},t_{0}) \in (0,1)\times R^{+*}$, the restriction on $(h^1(x_0,t_0),h^2(x_0,t_0))$ of the law of $u(x_{0},t_{0})$ is absolutely continuous.
\end{theorem}
we will show that, for all $a>0$, the restriction on $[h^{1}(x_{0},t_{0})+a,h^{2}(x_{0},t_{0})-b]$, the law of $u(x_{0},t_{0})$ is absolute continuous.  From Proposition 2.2 in  \cite{BH} and Proposition 3.3 in \cite{DP1}, it remains to prove if $\sigma >0$, then,
$||Du(x_{0},t_{0})||_{L^{2}([0,1]\times R^{+})}>0$ 
on 
$$\Omega_{a,b}=\{u(x_{0},t_{0})-h^{1}(x_{0},t_{0})\geq a, h^{2}(x_{0},t_{0})-u(x_{0},t_{0}) \geq b\}.$$
And,
\begin{equation}
||Du(x_{0},t_{0})||_{L^{2}(R^{+}\times[0,1])}>0
\Leftrightarrow
\int_{0}^{t_{0}}\int_{0}^{1}|D_{y,s}(u(x_{0},t_{0}))|dyds>0\  a.s.\label{positive norm}
\end{equation}
if $\sigma>0$, then $D_{y,s}u^{\epsilon,\delta}(x_{0},t_{0}) \geq 0$ by Eq(\ref{Du separation}).
By weak limit, $D_{y,s}u(x_{0},t_{0})\geq 0,$ for $(y,s)\in [0,1]\times[0,t_{0}]$.
Inequality (\ref{positive norm}) is equivalent to
\begin{equation}
\int_{0}^{t_{0}}\int_{0}^{1}D_{y,s}u(x_{0},t_{0})dyds>0 \ on \ \Omega_{a,b} \label{positive norm2}
\end{equation}
To demonstrate (\ref{positive norm2}), we will give a lower bound of $D_{y,s}u(x_{0},t_{0})$.\\
$(x_{0},t_{0}) \in (0,1)\times R^{+*}$, for $y<x_{0}$ and $s<t_{0}$,
we note $\{w(y,s;x,t);x\in[y, \widetilde{y}=(2x_{0}-y)\land 1],t>s\}$ is the solution of SPDE:
\begin{equation}
  \left\{
   \begin{aligned}
   \frac{\partial{w(x,t)}}{\partial{t}}
   -\frac{\partial^{2}w(x,t)}{\partial{x}^{2}}=\sigma^{'}(u(x,t))w(x,t)\dot{W}(x,t)+f^{'}(u(x,t))w(x,t),\\
   w(x,s)=\sigma(u(x,s)),y<x<\widetilde{y},\\
   w(y,t)=w(\widetilde{y},t)=0,t>s.\\
   \end{aligned}
  \right.  \label{linear}
  \end{equation}
 (We have omitted the dependence of $w$ of $y,s$ for abbreviation.)
 
\begin{proposition}
Suppose $a>0$ and $(x_{0},t_{0})\in (0,1)\times R^{+*} $. For $y<x_{0}$ and $s<t_{0}$,
we define $$B_{y,s}=\{w\in \Omega, \inf_{z\in [y,\widetilde{y}]}(u(z,s)-h^{1}(z,s))>\frac{a}{2} \ and\
\inf_{z\in[y,\widetilde{y}]}(h^{2}(z,s)-u(z,s))>\frac{b}{2}\},$$ $B_{y,s}$ is $\mathcal{F}_s$-measurable. If $\tau_{y,s}$ is stopping time defined by
\begin{equation}
\tau_{y,s}=\inf\{t\geq s, inf_{z\in[y,\widetilde{y}]}(u(z,t)-h^{1}(z,t))=\frac{a}{2} \ or\ inf_{z\in[y,\widetilde{y}]}(h^{2}(z,t)-u(z,t))=\frac{b}{2} \}.
\end{equation}
Then,
\begin{equation}
\int_{y}^{\widetilde{y}}D_{z,s}u(x_{0},t_{0})dz\geq w(y,s;x_{0},t_{0})I_{\{\tau_{y,s}>t_{0}\}} a.s.
\end{equation}
$w(y,s;x,t)$ is the solution of (\ref{linear}) and $w(y,s;x_{0},t_{0})>0$ a.s.
\end{proposition}
\begin{lemma}
$v^{\epsilon,\delta}(y,s;x,t)\geq w^{\epsilon,\delta}(y,s;x,t),\forall t>s, x\in [y,\widetilde{y}].$ a.s.
\end{lemma}
\begin{lemma}
There exists a subsequence of $w^{\epsilon,\delta}$ (we still note it $w^{\epsilon,\delta}$) such that
\begin{eqnarray*}
w^{\epsilon,\delta}(y,s;x_{0},t_{0}\wedge \tau_{y,s})I_{B_{y,s}} \longrightarrow w(y,s;x_{0},t_{0}\wedge \tau_{y,s})I_{B_{y,s}},
\end{eqnarray*}
and
$w(y,s;x,t)$ is solution of SPDE(\ref{linear}) which can be written as integral:
\begin{eqnarray*}
w(y,s;x,t)
&=&\int_{y}^{\widetilde{y}}\widetilde{G_{t-s}}(x,z)\sigma(u(z,s))dz\\
&+&\int_{s}^{t}\int_{y}^{\widetilde{y}}\widetilde{G_{t-r}}(x,z)\sigma^{'}(u(z,r))w(y,s;z,r)W(dzdr)\\
&+&\int_{s}^{t}\int_{y}^{\widetilde{y}}\widetilde{G_{t-r}}(x,z)f^{'}(u(z,r))w(y,s;z,r)dzdr, t>s, y<x<\widetilde{y}.
\end{eqnarray*}
\end{lemma}
We leave the proofs of Lemma 3.3.1 and 3.3.2 to the end of this section.

\textbf{Demonstration of Proposition 3.3.1:}
Observe first that
$B_{y,s}=\{\tau_{y,s}>s\}$ by continuity of u and
\begin{eqnarray*}
\{\tau_{y,s}>t_{0}\}
&=&\{w,\inf_{z\in[y,\widetilde{y}], r\in[s,t_{0}]}(u(z,r)-h^{1}(z,r))>\frac{a}{2}\ and\\
&&\inf_{z\in[y,\widetilde{y}], r\in[s,t_{0}]}(h^{2}(z,r)-u(z,r))>\frac{b}{2}\},
\end{eqnarray*}
fix $(y,s)\in [0,x_{0})\times[0,t_{0}).$
According to Proposition 3.3.2,
$\int_{y}^{\widetilde{y}}D_{z,s}u(x_{0},t_{0})dz$ is the weak limit in $L^{p}(\Omega)$ of the subsequence of $\int_{y}^{\widetilde{y}}D_{z,s}u^{\epsilon,\delta}(x_{0},t_{0})dz$.\\
Note $v(y,s;x,t):=\int_{y}^{\widetilde{y}}D_{z,s}u(x,t)dz$, and $v^{\epsilon,\delta}(y,s;x,t):=\int_{y}^{\widetilde{y}}D_{z,s}u^{\epsilon,\delta}(x,t)dz,$ for $s<t$.\\
$v^{\epsilon,\delta}$ is the solution of linear SPDE:
\begin{eqnarray*}
&&v^{\epsilon,\delta}(y,s;x,t)\\
&=&\int_{y}^{\widetilde{y}}G_{t-s}(x,z)\sigma(u^{\epsilon,\delta}(z,s))dz\\
&&+\int_{s}^{t}\int_{0}^{1}G_{t-r}(x,z)\sigma^{'}(u^{\epsilon,\delta}(z,r))v^{\epsilon,\delta}(y,s;z,r)W(dzdr)\\
&&+\int_{s}^{t}\int_{0}^{1}G_{t-r}(x,z)f^{'}_{\epsilon,\delta}(u^{\epsilon,\delta}(z,r))v^{\epsilon,\delta}(y,s;z,r)drdz, t>s;\\
&&f^{'}_{\epsilon,\delta}(u^{\epsilon,\delta}(z,r))\\
&=&[f(u^{\epsilon,\delta}(z,r))+\frac{1}{\delta}k_1-\frac{1}{\epsilon}k_2]^{'}.
\end{eqnarray*}
Introduce $w^{\epsilon,\delta}(y,s;x,t)$ to be the solution of the same SPDE as 
$v^{\epsilon,\delta}(y,s;x,t)$ restricted in the interval $[y,\widetilde{y}]$ with Dirichlet conditions at $y, \widetilde{y}$.

\begin{equation}
  \left\{
   \begin{aligned}
   \frac{\partial{w^{\epsilon,\delta}(x,t)}}{\partial{t}}
 -\frac{\partial^{2}w^{\epsilon,\delta}(x,t)}{\partial{x}^{2}}&=&\sigma^{'}(u^{\epsilon,\delta}(x,t))w^{\epsilon,\delta}(x,t)\dot{W}(x,t)\\
 &&+f^{'}_{\epsilon,\delta}(u^{\epsilon,\delta}(x,t))w^{\epsilon,\delta}(x,t);\\
   w^{\epsilon,\delta}(x,s)=\sigma(u^{\epsilon,\delta}(x,s)),y<x<\widetilde{y};\\
   w^{\epsilon,\delta}(y,t)=w^{\epsilon,\delta}(\widetilde{y},t)=0,t>s.\\
   \end{aligned}
  \right.  \label{w's sequence}
\end{equation}
(We have omitted the dependence of $w^{\epsilon,\delta}$ of $y,s$ for abbreviation.)\\
We have the integral form:
\begin{eqnarray*}
w^{\epsilon,\delta}(y,s;x,t)
&=&\int_{y}^{\widetilde{y}}\widetilde{G_{t-s}}(x,z)\sigma(u^{\epsilon,\delta}(z,s))dz\\
&+&\int_{s}^{t}\int_{y}^{\widetilde{y}}\widetilde{G_{t-r}}(x,z)\sigma^{'}(u^{\epsilon,\delta}(z,r))w^{\epsilon,\delta}(y,s;z,r)W(dzdr)\\
&+&\int_{s}^{t}\int_{y}^{\widetilde{y}}\widetilde{G_{t-r}}(x,z)f^{'}_{\epsilon,\delta}(u^{\epsilon,\delta}(z,r))w^{\epsilon,\delta}(y,s;z,r)dzdr,\\
&& t>s, y<x<\widetilde{y},
\end{eqnarray*}
where $f^{'}_{\epsilon,\delta}(u^{\epsilon,\delta}(z,r))
=[f(u^{\epsilon,\delta}(z,r))+\frac{1}{\delta}k_1-\frac{1}{\epsilon}k_2]^{'}.$\\
$\widetilde{G}$ denotes the fundamental solution of the heat equation with Dirichlet conditions on $y$ and $\widetilde{y}$($\widetilde{G}$ depends on $y$).\\
Next we will use Lemma 3.3.1 and Lemma 3.3.2 to get our result:\\
Note: $v(y,s;x_{0},t_{0})=\int_{y}^{\widetilde{y}}D_{z,s}u(x_{0},t_{0})dz\geq 0$,
\begin{eqnarray*}
v(y,s;x_{0},t_{0})&\geq& v(y,s;x_{0},t_{0})I_{\{\tau_{y,s}>t_{0}\}}\\
&=&\lim_{\epsilon,\delta \rightarrow 0}v^{\epsilon,\delta}(y,s;x_{0},t_{0})I_{\{\tau_{y,s}>t_{0}\}}\\
v^{\epsilon,\delta}(y,s;x_{0},t_{0})I_{\{\tau_{y,s}>t_{0}\}}
&\geq& w^{\epsilon,\delta}(y,s;x_{0},t_{0})I_{\{\tau_{y,s}>t_{0}\}}
\end{eqnarray*}
and
\begin{equation}
w^{\epsilon,\delta}(y,s;x_{0},t_{0})I_{\{\tau_{y,s}>t_{0}\}}\longrightarrow w(y,s;x_{0},t_{0})I_{\{\tau_{y,s}>t_{0}\}} a.s.
\end{equation}
$w(y,s;x_{0},t_{0})>0$ is a consequence of the result in Pardoux and Zhang \cite{PZ}(Proposition 3.1). $\Box$ \\

\textbf{Demonstration of Theorem 3.3.1:}
By Proposition 3.3.3, for all $s<t_{0}$ and $y<x_{0}$, there exists a measurable set $\Omega_{y,s}$ of probability 1
such that $\forall \omega \in \Omega_{y,s}$, we have:
\begin{eqnarray}
v(y,s;x_{0},t_{0})(\omega) \geq w(y,s;x_{0},t_{0})I_{\tau_{y,s}>t_{0}}(\omega)\\
and \  w(y,s;x_{0},t_{0})>0. \label{proposition's result}
\end{eqnarray}
We define $\widetilde{\Omega_{s}}=\cap_{y\in[0,x_{0})\cap Q}\Omega_{y,s}$ and then $P(\widetilde{\Omega_{s}})=1.$
In order to prove (\ref{positive norm2}), we need the following estimate.\\
By continuity of $u$, there exist two random variables $S_{0}$ and $Y_{0}$ such that $Y_{0}< x_{0}$, and $S_{0}<t_{0}$ on $\Omega_{a,b}$ and
\begin{equation}
u(z,s)-h^{1}(z,s)>\frac{a}{2}, \ h^{2}(z,s)-u(z,s)>\frac{b}{2} \  \forall r\in[S_{0},t_{0}], z\in[Y_{0},\widetilde{Y_{0}}] \ a.s.\  on\  \Omega_{a,b}
\end{equation}
A sufficient condition to prove (\ref{positive norm2}) is
\begin{equation}
\int_{S_{0}}^{t_{0}}ds\int_{0}^{1}D_{z,s}u(x_{0},t_{0})dz>0 \ on \ \Omega_{a,b} \label{positive norm3}
\end{equation}
Note $k(s)=\int_{0}^{1}D_{z,s}u(x_{0},t_{0})dz$,  (\ref{positive norm3}) can be verified if we show
$k(s)>0$ a.s. on $\Omega_{a,b}$, $\forall S_{0}\leq s\leq t_{0}$.\\
On $\Omega_{a,b}\cap \widetilde{\Omega_{s}}$,
\begin{eqnarray}
k(s)
&\geq& v(y,s;x_{0},t_{0}) \ \ \forall\  y\in Q\\
&\geq& w(y,s;x_{0},t_{0})I_{\{\tau_{y,s}>t_{0}\}}.
\end{eqnarray}
Take $y\in [Y_{0},x_{0})\cap Q$, then $$I_{\{\tau_{y,s}>t_{0}\}}=1$$ and $$k(s)\geq w(y,s;x_{0},t_{0})>0$$ according to (\ref{proposition's result}).$\Box$\\
\textbf{Demonstration of Lemma 3.3.1:}\\
The proof of Lemma 3.3.1 is the same as Proposition 5.1 and Corollary 5.1 in Appendix of \cite{DP1}.\\
\textbf{Demonstration of Lemma 3.3.2:}\\
Step 1:
we introduce the intermediate solution $\bar{w}^{\epsilon,\delta}$ of SPDE which is similar as $w^{\epsilon,\delta}$:
\begin{eqnarray*}
\bar{w}^{\epsilon,\delta}(y,s;x,t)
&=&\int_{y}^{\widetilde{y}}\widetilde{G_{t-s}}(x,z)\sigma(u^{\epsilon,\delta}(z,s))dz\\
&+&\int_{s}^{t}\int_{y}^{\widetilde{y}}\widetilde{G_{t-r}}(x,z)\sigma^{'}(u^{\epsilon,\delta}(z,r))w^{\epsilon,\delta}(y,s;z,r)W(dzdr)\\
&+&\int_{s}^{t}\int_{y}^{\widetilde{y}}\widetilde{G_{t-r}}(x,z)f^{'}(u^{\epsilon,\delta}(z,r))\bar{w}^{\epsilon,\delta}(y,s;z,r)dzdr, t>s,y<x<\widetilde{y}
\end{eqnarray*}
so that $w^{\epsilon, \delta}(y,s;x,t)-\bar{w}^{\epsilon,\delta}(y,s;x,t)$ satisfies the following PDE with random coefficients:
\begin{eqnarray} \nonumber
&&w^{\epsilon,\delta}(y,s;x,t)-\bar{w}^{\epsilon,\delta}(y,s;x,t)\\ \nonumber
&=&\int_{s}^{t}\int_{y}^{\widetilde{y}}\widetilde{G_{t-r}}(x,z)[ f^{'}_{\epsilon,\delta}(u^{\epsilon,\delta}(z,r))w^{\epsilon,\delta}(y,s;z,r)-f^{'}(u^{\epsilon,\delta}(z,r))\bar{w}^{\epsilon,\delta}(y,s;z,r)]dzdr \\
  \label{difference of w sequence}
\end{eqnarray}
Next we will show that for $t>s, x\in (y,\widetilde{y})$,
\begin{eqnarray}
[w^{\epsilon,\delta}(y,s;x,t\wedge \tau_{y,s})-\bar{w}^{\epsilon,\delta}(y,s;x,t\wedge \tau_{y,s})]I_{B_{y,s}}\longrightarrow 0. \label{limit of difference of w sequence}
\end{eqnarray}
Fix a trajectory $w\in B_{y,s}$ and consider the previous equation (\ref{difference of w sequence}) at $t\wedge \tau_{y,s}(w)$,\\
$\forall (z,r)\in [y,\widetilde{y}] \times [s,t\wedge \tau_{y,s}(w)]$, we have $u(z,r)-h^{1}(z,r)>\frac{a}{2}, h^{2}(z,r)-u(z,r)>\frac{b}{2}.$\\
Since $u^{\epsilon,\delta}$ uniformly converges to  $u$ on $ [0,T]\times [0,1],$
then there exists $\epsilon_{0}(w)>0$ such that $\epsilon<\epsilon_{0}$, $u^{\epsilon,\delta}(z,r)-h^{1}(z,r)>\frac{a}{4};$ and there exists $\delta_{0}(w)>0$ such that $\delta<\delta_{0},$ $h^{2}(z,r)-u^{\epsilon,\delta}(z,r)>\frac{b}{4}.$\\
Then for $(z,r) \in  [y,\widetilde{y}] \times [s,t\wedge \tau_{y,s}],$ we have $f^{'}_{\epsilon,\delta}(u^{\epsilon,\delta}(z,r))=f^{'}(u^{\epsilon,\delta}(z,r)),$ for $\epsilon<\epsilon_{0}, \delta <\delta_{0}$.\\
For $t>s, x\in [y,\widetilde{y}],$
\begin{eqnarray*}
&&w^{\epsilon,\delta}(y,s;x,t\wedge \tau_{y,s}(w))-\bar{w}^{\epsilon,\delta}(y,s;x,t\wedge \tau_{y,s}(w))(w)\\
&=&\int_{s}^{t\wedge \tau_{y,s}}\int_{y}^{\widetilde{y}}\widetilde{G_{t-r}}(x,z)[f^{'}(u^{\epsilon,\delta}(z,r))(w^{\epsilon,\delta}(y,s;z,r)-\bar{w}^{\epsilon,\delta}(y,s;z,r))]dzdr.
\end{eqnarray*}
Then,
\begin{eqnarray*}
&&|w^{\epsilon,\delta}(y,s;x,t\wedge \tau_{y,s}(w))-\bar{w}^{\epsilon,\delta}(y,s;x,t\wedge \tau_{y,s}(w))(w)|^{2}\\
&=&|\int_{s}^{t\wedge \tau_{y,s}}\int_{y}^{\widetilde{y}}\widetilde{G_{t-r}}(x,z)[f^{'}(u^{\epsilon,\delta}(z,r))(w^{\epsilon,\delta}(y,s;z,r)-\bar{w}^{\epsilon,\delta}(y,s;z,r))]dzdr|^{2}\\
&\leq&K\int_{s}^{t\wedge \tau_{y,s}}\int_{y}^{\widetilde{y}} \widetilde{G_{t-r}}^{2}(x,z)dzdr \int_{s}^{t\wedge \tau_{y,s}}\int_{y}^{\widetilde{y}}|w^{\epsilon,\delta}(y,s;z,r)-\bar{w}^{\epsilon,\delta}(y,s;z,r)|^{2}dzdr.
\end{eqnarray*}
We deduce that
\begin{eqnarray*}
&&\sup_{x}|w^{\epsilon,\delta}(y,s;x,t\wedge \tau_{y,s}(w))-\bar{w}^{\epsilon,\delta}(y,s;x,t\wedge \tau_{y,s}(w))(w)|^{2}\\
&\leq& KM_{t}\int_{s}^{t\wedge \tau_{y,s}}\sup_{z}|w^{\epsilon,\delta}(y,s;z,r)-\bar{w}^{\epsilon,\delta}(y,s;z,r)|^{2}(\widetilde{y}-y)dr.
\end{eqnarray*}
According to Gronwall's Lemma:
\begin{equation}
\sup_{x}|w^{\epsilon,\delta}(y,s;x,t\wedge \tau_{y,s}(w))-\bar{w}^{\epsilon,\delta}(y,s;x,t\wedge \tau_{y,s}(w))(w)|^{2}=0. a.s.
\end{equation}
Then,
\begin{equation}
|w^{\epsilon,\delta}(y,s;x,t\wedge \tau_{y,s}(\omega))(\omega)-\bar{w}^{\epsilon,\delta}(y,s;x,t\wedge \tau_{y,s}(\omega))(\omega)|=0 \ for\  \epsilon<\epsilon_{0}, \delta<\delta_{0}.   \label{convergence}
\end{equation}
We have proved (\ref{limit of difference of w sequence}).

Step 2:
$\bar{w}^{\epsilon,\delta} \longrightarrow w$\\
Note that the sequence of $w^{\epsilon,\delta}$ and $\bar{w}^{\epsilon,\delta}$ are bounded in $L^{p}(\Omega; L^{p}([y,\widetilde{y}]\times[s,t] ))$ $i.e.$
\begin{eqnarray}
\sup_{\epsilon,\delta}E[\int_{s}^{t}\int_{y}^{\widetilde{y}}(w^{\epsilon,\delta}(y,s;z,r))^{p}drdz]<\infty,
\label{w sequence}
\end{eqnarray}
\begin{eqnarray}
\sup_{\epsilon,\delta}E[\int_{s}^{t}\int_{y}^{\widetilde{y}}(\bar{w}^{\epsilon,\delta}(y,s;z,r))^{p}dzdr]<\infty,  \label{w bar sequence}
\end{eqnarray}
The convergence $a.s.$ obtained in (\ref{limit of difference of w sequence}) together with Inequalities (\ref{w sequence}) and (\ref{w bar sequence}) obtained for p, ensuring the convergence of
\begin{equation*}
[w^{\epsilon,\delta}(y,s;\cdot,\cdot \wedge \tau_{y,s})-\bar{w}^{\epsilon,\delta}(y,s;\cdot,\cdot \wedge \tau_{y,s})]I_{B_{y,s}} \ to \ 0
\end{equation*}
in $L^{p}(\Omega;L^{p}([y,\widetilde{y}]\times[s,T]))$, that is to say
\begin{equation*}
E[\int_{s}^{T\wedge \tau_{y,s}}\int_{y}^{\widetilde{y}}(w^{\epsilon,\delta}(y,s;z,r)-\bar{w}^{\epsilon,\delta}(y,s;z,r))^{p}dzdr]\longrightarrow 0, \ \ \epsilon,\delta \rightarrow 0
\end{equation*}
\begin{eqnarray*}
&&w(x,t)-\bar{w}^{\epsilon,\delta}(x,t)\\
&=&\int_{y}^{\widetilde{y}}\widetilde{G_{t-s}}(x,z)[\sigma(u(z,s))-\sigma(u^{\epsilon,\delta}(z,s))]dz\\
&&+\int_{s}^{t}\int_{y}^{\widetilde{y}}\widetilde{G_{t-r}}(x,z)[\sigma^{'}(u(z,r))w(z,r)-\sigma^{'}(u^{\epsilon,\delta}(z,r))w^{\epsilon,\delta}(y,s;z,r)]W(dzdr)\\
&&+\int_{s}^{t}\int_{y}^{\widetilde{y}}\widetilde{G_{t-r}}(x,z)[f^{'}(u(z,r))w(z,r)-f^{'}(u^{\epsilon,\delta}(z,r))\bar{w}^{\epsilon,\delta}(y,s;z,r)]dzdr,\\
&&\ \ \ for \ \ \  t>s, y<x<\widetilde{y}
\end{eqnarray*}
Let
\begin{equation}
F^{\epsilon,\delta}(t)=\sup_{x\in[y,\widetilde{y}]}E[|w(x,t\wedge \tau_{y,s})-\bar{w}^{\epsilon,\delta}(x,t\wedge \tau_{y,s})|^{p}I_{B_{y,s}}], t>s
\end{equation}
Following the similar steps as $P.417$ in \cite{DP1}, we can show
\begin{equation}
F^{\epsilon,\delta}(t)\leq K_{p}(C^{\epsilon,\delta}+\int_{s}^{t}F^{\epsilon,\delta}(r)dr) \ and \ C^{\epsilon,\delta}\longrightarrow 0
\end{equation}
From Gronwall Lemma:
$F^{\epsilon,\delta}(t)\longrightarrow 0, \ \epsilon,\delta \rightarrow 0$\\
So we have a subsequence of $\bar{w}^{\epsilon,\delta}$ (still denote it $\bar{w}^{\epsilon,\delta}$) such that
\begin{equation}
|w(x,t\wedge \tau_{y,s})-\bar{w}^{\epsilon,\delta}(x,t\wedge \tau_{y,s})|^{p}I_{B_{y,s}}\longrightarrow 0 \  (\epsilon,\delta \rightarrow 0).
\end{equation}
$\Box$
\end{proof}
\section*{Acknowledgements}
I am grateful to Professor Tusheng Zhang for useful comments.


\begin{thebibliography}\\

\bibitem{BH} N. Bouleau, F. Hirsch. Propriete dÕabsolue continuit dans les espaces de Dirichlet et applications aux quations diffrentielles stochastiques, Seminaire de Probabilities XX. Berlin, Springer, 1986(Lecture Notes in Math, 1204).

\bibitem{BL}A. Bensoussan, J.L. Lions: Applications des inŽquations variationnelles en contr™le stochastique. Paris: Dunod 1978; English translation. Amsterdam: North-Holland 1982. 

\bibitem{BP1} V. Bally, E. Pardoux, Malliavin calculus for white noise driven SPDEs, Potential Analysis, vol.9(1997), p.27-64.

\bibitem{DMZ} R. Dalang, C. Mueller, L. Zambotti, Hitting properties of parabolic SPDE's with reflection. Ann. Probab. 34(4), 1423-1450(2006).

\bibitem{DP} C. Donati-Martin, E. Pardoux, White noise driven SPDEs with reflection, Probability Theory and Related Fields, 95(1993),1-24.

\bibitem{DP1} C. Donati-Martin,E.Pardoux, EDPS Reflechies et Calcul De Malliavin, Bull. Sci. math, 121(1997),p.405-422.

\bibitem{FO} T. Funaki, S. Olla. Fluctuations for  $\bigtriangledown \phi$ interface model on a wall. Stoch. Proc. Appl. 94(2001) 1-27.

\bibitem{N} D. Nualart, The Malliavin Calculus and Related Topics,Second edition, Springer 2006.

\bibitem{NP} D. Nualart, E. Pardoux, White Noise Driven Quasilinear SPDEs with Reflection, Probability Theory and Related Fields 93, (1992) 77-89.

\bibitem{PZ} E. Pardoux, T.S. Zhang, Absolute continuity of the law of the solution of a parabolic SPDE, Journal of Functional Analysis, vol.112, 1993, p.447-458.

\bibitem{XZ} T.G. Xu, T.S. Zhang, White noise driven SPDEs with reflection: Existence, uniqueness and large deviation principles, Stochastic Processes and their Applications, 119(2009) 3453-3470.

\bibitem{YZ1} T.S. Zhang, J. Yang, White noise driven SPDEs with two reflecting walls, 
Infinite Dimensional Analysis 14(2011), 647.

\bibitem{YZ2}J. Yang, T.S. Zhang, Existence and uniqueness of invariant measures for SPDEs with two reflecting walls,

\bibitem{O} Y. Otobe, Stochastic partial differential equations with two reflecting walls, J. Math. Sci.Univ.Tokyo, 13(2006),129-144.

\end{thebibliography}
\end{document}